\newcommand{\R}{\mathbb{R}}
\newcommand{\N}{\mathbb{N}}
\newcommand{\Id}{{\mathds{1}}} 
\newcommand{\tr}{\mathrm{tr}} 
\DeclareMathOperator*{\argmin}{argmin}
\renewcommand{\d}{{\,\mathrm{d}}} 
\renewcommand{\div}{{\,\mathrm{div}}} 
\DeclareMathOperator{\cof}{cof}
\newcommand{\energy}{\mathcal{W}} 
\newcommand{\Energy}{\mathbf{W}}
\newcommand{\pathenergy}{\mathcal{E}}
\newcommand{\Pathenergy}{\mathbf{E}}
\newcommand{\metric}{g}
\newcommand{\manifold}{\mathcal{M}}
\newcommand{\y}{y}
\newcommand{\Velocity}{\mathbf{v}} 
\newcommand{\domain}{\Omega}
\newcommand{\Mass}{\mathbf{M}_h}
\newcommand{\Stiff}{\mathbf{S}_h}
\newcommand{\Exp}[1]{\mathrm{EXP}^{#1}} 
\newcommand{\image}{u}
\newcommand{\Image}{\mathbf{U}}
\newcommand{\admset}{\mathcal{A}} 
\newcommand{\deformation}{\phi}
\newcommand{\notinclude}[1]{}
\newcommand{\beq}{\begin{equation*}}
\newcommand{\eeq}{\end{equation*}}
\newcommand{\beqn}{\begin{equation}}
\newcommand{\eeqn}{\end{equation}}
\newcommand{\beqa}{\begin{eqnarray*}}
\newcommand{\eeqa}{\end{eqnarray*}}
\newcommand{\beqan}{\begin{align}}
\newcommand{\eeqan}{\end{align}}
\DeclareRobustCommand\onedot{\futurelet\@let@token\@onedot}
\def\@onedot{\ifx\@let@token.\else.\null\fi\xspace}
\def\ie{\emph{i.e}\onedot} 
\def\cf{\emph{cf}\onedot} 
\def\wrt{w.r.t\onedot} 
\def\etal{\emph{et al}\onedot}
\def\namedlabel#1#2{\begingroup#2\def\@currentlabel{#2}\phantomsection\label{#1}\endgroup}
\definecolor{uniblau}{HTML}{004291}
\definecolor{bigsblau}{HTML}{365079}
\definecolor{bigsblau50}{HTML}{9CA7BC}
\definecolor{bigsblau25}{HTML}{CDD3DD}
\definecolor{uniorangedark}{HTML}{E6B400}
\definecolor{uniorange}{HTML}{FFCB0E}
\definecolor{uniorange!50}{HTML}{FFE586}
\definecolor{uniwhite}{HTML}{FFF2C2}
\definecolor{hcmgruen}{HTML}{567877}
\definecolor{hcmgruen50}{HTML}{AFBDBE}
\definecolor{hcmgruen25}{HTML}{D7DEDE}
\definecolor{himgrau}{HTML}{626566}
\definecolor{himgrau75}{HTML}{949592}
\definecolor{himgrau50}{HTML}{C5C4BE}
\definecolor{himgrau25}{HTML}{F5F5F5}
\definecolor{textgrau}{HTML}{000000}
\definecolor{black}{HTML}{000000}
\definecolor{white}{HTML}{FFFFFF}
\definecolor{lightgray}{gray}{0.9}
\colorlet{hcmrot}{BrickRed}
\colorlet{red}{BrickRed}
\colorlet{blue}{uniblau}
\colorlet{greyblue}{bigsblau}
\colorlet{hcmgelb}{uniorange}
\colorlet{yellow}{uniorange}
\colorlet{tafelgruen}{hcmgruen}
\colorlet{green}{hcmgruen}
\newcommand{\testImage}{v}
\newcommand{\testDeformation}{\psi}
\newcommand{\testDeformationTwo}{\zeta}
\newcommand{\operator}{\mathcal{T}}
\newcommand{\Operator}{{\mathbf{T}}}
\newcommand{\secondOperator}{\mathcal{R}}
\newcommand{\SecondOperator}{\mathbf{R}}
\newcommand{\TestDeformation}{\mathbf{\Psi}}
\newcommand{\Deformation}{\mathbf{\Phi}}
\newcommand{\IntensityModulation}{\mathbf{I}}
\newcommand{\sym}{\mathrm{sym}}
\newcommand{\testSpace}{H^{2m}_0(\domain)}
\newcommand{\testSpaceDual}{H^{-2m}(\domain)}
\newcommand{\deformationNeighborhood}{{\bf{D}}}
\newcommand{\imageNeighborhood}{{\bf{U}}}
\newcommand{\IFTFunctional}{\mathcal{J}}
\newcommand{\detTerm}{{h}}
\newcommand{\quadratureWeight}{\omega}
\newcommand{\quadraturePoint}{\mathbf{x}}
\newcommand{\DeformationSpace}{{\mathcal{S}_H}}
\newcommand{\ImageSpace}{{\mathcal{V}_h}}
\newcommand{\indexImageNode}{{I_\ImageSpace^N}}
\numberwithin{equation}{section}
\theoremstyle{plain}
\newtheorem{theorem}{Theorem}[section]
\newtheorem{lemma}[theorem]{Lemma}
\newtheorem{proposition}[theorem]{Proposition}
\theoremstyle{remark}
\newtheorem*{remark}{Remark}
\theoremstyle{definition}
\begin{document}

\title{Image Extrapolation for the Time Discrete \\ Metamorphosis Model -- Existence and Applications}
\author{Alexander Effland, Martin Rumpf and Florian Sch\"afer}
\maketitle

\begin{abstract}
The space of images can be equipped with a Riemannian metric
measuring both the cost of transport of image intensities and
the variation of image intensities along motion lines. The resulting metamorphosis model was introduced 
and analyzed in \cite{MiYo01,TrYo05} and a variational time discretization for the geodesic interpolation was proposed in \cite{BeEf14}.
In this paper, this time discrete model is expanded and an image extrapolation via a discrete exponential map is consistently derived for the variational time discretization.
For a given weakly differentiable initial image and an initial image variation, the exponential map allows to compute a discrete geodesic
extrapolation path in the space of images. It is shown that a time step of this shooting method can be formulated in the associated deformations only. 
For sufficiently small time steps local existence and uniqueness are proved using a suitable fixed point formulation and the implicit function theorem.
A spatial Galerkin discretization with cubic splines on coarse meshes for the deformation
and piecewise bilinear finite elements on fine meshes for the image intensities are used to derive a fully practical algorithm.
Different applications underline the efficiency and stability of the proposed approach.\footnote{This paper is an extension of the prior proceedings paper \cite{EfRuSc17}.}
\end{abstract}

\section{Introduction}
Riemannian geometry has influenced imaging and computer vision tremendously in the past decades.
In particular, many methods in image processing have benefited from concepts emerging from Riemannian geometry
like geodesic curves, the logarithm, the exponential map, and parallel transport.
For example, when considering the space of images as an infinite-dimensional Riemannian manifold, the exponential map of an input image \wrt an initial variation
corresponds to an image extrapolation in the direction of this infinitesimal variation.
In particular, the large deformation diffeomorphic metric mapping (LDDMM) framework proved to be a powerful tool underpinned with the rigorous mathematical theory
of diffeomorphic flows. In fact, Dupuis \etal \cite{DuGrMi98} showed that the resulting flow is actually a flow of diffeomorphism.
Trouv\'e \cite{Tr95a,Tr98} exploited Lie group methods to construct a distance in the space of deformations.
In \cite{JoMi00}, Joshi and Miller applied this framework to (inexact and exact) landmark matching.
Beg \etal \cite{BeMiTr05} studied Euler--Lagrange equations for minimizing vector fields in the LDDMM framework and proposed an efficient algorithm
incorporating a gradient descent scheme and a semi-Lagrangian method to integrate the velocity fields.
Miller \etal \cite{MiTrYo06} proved the conservation of the initial momentum in Lagrangian coordinates associated with a geodesic in the LDDMM framework,
which allows for the stable computation of geodesic curves.
Younes \cite{Yo07} used Jacobi fields in the flow of diffeomorphism approach to derive gradient descent methods for the path energy.
In \cite{HaZaNi09}, Hart \etal exploited the optimal control perspective to the LDDMM model with the motion field as the underlying control.
Vialard \etal \cite{ViRiRu12a,ViRiRu12} studied methods from optimal control theory
to accurately estimate this initial momentum and to relate it to the Hamiltonian formulation of the geodesic flow. 
Furthermore, they used the associated Karcher mean to compute intrinsic means of medical images.
Vialard and Santambrogio investigated in \cite{ViSa09} the flow of diffeomorphism approach for images in the space of functions of bounded variation.
In particular, they were able to rigorously derive an Euler--Lagrange equation for the formulation with a matching energy.
Lorenzi and Pennec \cite{LoPe13} applied the LDDMM framework to compute geodesics and parallel transport 
using Lie group methods.

The metamorphosis model \cite{MiYo01,TrYo05} generalizes the flow of diffeomorphism approach allowing for intensity variations along transport paths
and associated a corresponding cost functional with these variations. 
In \cite{TrYo05a}, Trouv\'e and Younes rigorously analyzed the local geometry of the resulting Riemannian manifold
and proved the existence of geodesic curves for square-integrable images and the (local) existence as well as the uniqueness of solutions of the initial value
problem for the geodesic equation in the case of images with square-integrable weak derivatives.
Holm \etal \cite{HoTrYo09} studied a Lagrangian formulation for the metamorphosis model
and proved existence for both the boundary value and the initial value problem in the case of measure-valued images.
Hong \etal \cite{HoJoSa12} proposed a metamorphic regression model and developed a shooting method to reliably recover initial momenta.

A comprehensive overview of most of the aforementioned topics is given in the book by Younes \cite{Yo10}, for a historic account we additionally refer to \cite{MiTrYo02}.

In \cite{BeEf14}, a variational time discretization of the metamorphosis model based on a sequence of simple, elastic image matching problems 
was introduced and $\Gamma$-convergence to the time continuous metamorphosis model
was proven. Furthermore, using a finite element discretization in space a robust algorithm was derived.
Exploiting de Casteljau's algorithm, this approach could also be used to compute discrete Riemannian B\'ezier curves
in the space of images \cite{EfRuSi14}.
In \cite{BeBuEf17}, the geodesic interpolation proposed in \cite{BeEf14} was employed to analyze the temporal evolution of a macular degeneration for
medical images acquired with an optical coherence tomography device, where an efficient GPU implementation is used to speed up the registration subproblems.

In this paper, we focus on the discrete exponential map associated with the time discrete metamorphosis model.
The Euler--Lagrange equations of the discrete path energy proposed in \cite{BeEf14} give rise to a set of equations, which characterize time steps 
of a discrete initial value problem for a given initial image and a given initial image variation. We study this time stepping problem both analytically and numerically. 
We will prove existence and uniqueness of solutions of the single time step problem, 
which is guaranteed to generate time discrete geodesics in the sense of the time discrete variational approach. 
A straightforward treatment, for instance via a Newton scheme, would lead to higher order derivatives of image functions concatenated with diffeomorphic deformations,
which are both theoretically and numerically very difficult to treat. We show how to avoid these difficulties using a proper transformation of the defining Euler--Lagrange equations
and reduce the number of unknowns.
With respect to the existence, we apply a fixed point argument based on Banach's fixed point theorem for images bounded in $H^1$ and an initial variation, which is supposed to be small in $L^2$.
The uniqueness proof is based on an implicit function theorem argument for initial variations, which are small in $H^1$.
Finally, the numerical algorithm picks up the fixed point approach for another variant of the Euler--Lagrange equations.
\medskip

Compared to the proceedings paper \cite{EfRuSc17}, which introduced
the discrete exponential map in the context of the time discrete metamorphosis model and the
numerical optimization algorithm, we give in this paper the comprehensive derivation of the method,
formulate and prove the existence of the discrete exponential map.
Furthermore, two additional applications are presented.
\medskip

The paper is organized as follows: In Section \ref{sec:review}, we briefly recall the metamorphosis model in the time continuous and time discrete setting, respectively.
Departing from the Euler--Lagrange equations of a time discrete geodesic, a single time step of the discrete exponential map is derived in Section \ref{sec:expMap}.
Then the discrete geodesic shooting relies on the iterative application of the one step extrapolation.
In Section \ref{sec:ELE}, local existence and local uniqueness of this discrete exponential map are proven
based on a suitable combination of the implicit function theorem and Banach's fixed point theorem.
The fixed point formulation is also used in Section \ref{sec:algo} to derive an efficient and stable algorithm.
Finally, numerical results for different applications are presented in Section \ref{sec:results}.
\medskip

We use standard notation for Lebesgue and Sobolev spaces on the image domain $\domain$, \ie $L^p(\domain)$ and $H^m(\domain)=W^{m,2}(\domain)$.
The associated norms are denoted by $\|\cdot\|_{L^p(\domain)}$ and $\|\cdot\|_{H^m(\domain)}$, respectively, and the seminorm in $H^m(\domain)$ is given by $|\cdot|_{H^m(\domain)}$.
Furthermore, $H^m_0(\domain)$ is the closure of $C^\infty(\domain)$ functions with compact support \wrt the norm $\|\cdot\|_{H^m(\domain)}$ and its dual is denoted by 
$H^{-m}(\domain)$.
For any $f,g\in H^m(\domain)$, $m\geq 1$, we set
\[
D^m f\cdot D^m g=\sum_{i_1,\ldots,i_m=1}^n\frac{\partial^m f}{\partial_{x_{i_1}}\cdots\partial_{x_{i_m}}}\cdot\frac{\partial^m g}{\partial_{x_{i_1}}\cdots\partial_{x_{i_m}}}\,,
\qquad
\left|D^m f\right|=\left(D^m f\cdot D^m f\right)^{\frac{1}{2}}\,.
\]
The polyharmonic operator is inductively defined by $\Delta^m f:=\Delta(\Delta^{m-1}f)$ for $f\in H^{2m}(\domain)$ with $m\geq 2$.
Depending on the context, $\Id$ denotes either the identity mapping or the identity matrix.
For a matrix $A\in\R^{n,n}$, we refer to $A^\sym=\frac{1}{2}(A+A^T)$ as the symmetric part of $A$.
The symbol ``:'' indicates the sum over all pairwise products of two tensors.
Finally, we denote the variational derivative of a functional $J$ at a point $A$ in a direction $B$ by $\partial_A J[A](B)=\frac{\mathrm{d}}{\mathrm{d}\epsilon}J[A+\epsilon B]\big|_{\epsilon=0}$.

\section{Review of the metamorphosis model and its time discretization}\label{sec:review}
In this section, we briefly recall in a non-rigorous fashion the Riemannian geometry of the space of images based on the flow of diffeomorphism and its extension, the metamorphosis model.
For a detailed exposition of these models we refer to \cite{DuGrMi98,MiYo01,TrYo05,HoTrYo09,TrYo05a}.

Throughout this paper, we suppose that the image domain $\domain\subset\R^n$ for $n\in\{2,3\}$ has Lipschitz boundary.
For a flow of diffeomorphisms
$\deformation(t):\overline\domain\rightarrow\R^n$ for $t\in [0,1]$ driven by the
\emph{Eulerian velocity} $v(t)=\dot\deformation(t)\circ\deformation^{-1}(t)$ we take into account a quadratic form $L$ subjected to certain growth and consistency conditions,
which can be considered as a Riemannian metric on the space of diffeomorphisms and thus on the space of diffeomorphic transformations $\image(t)=\image_0\circ\deformation^{-1}(t)$
of a given reference image $\image_0$.
Based on these ingredients one can define the associated \emph{(continuous) path energy}
\[
\widetilde\pathenergy[(\deformation(t))_{t\in [0,1]}]=\int^1_0\int_\domain L[v,v] \d x \d t\,.
\]
By construction, this model comes with the brightness constancy assumption in the sense
that the material derivative $\frac{D}{\partial t}\image=\dot\image+v\cdot\nabla\image$ vanishes along the motion paths.
Contrary to this, the metamorphosis approach allows for image intensity variations along motion paths and
penalizes the integral over the squared material derivative as an additional term in the metric.
Hence, the \emph{path energy} in the metamorphosis model for an image curve $\image\in L^2((0,1),L^2(\domain))$ and $\delta>0$ is defined as
\begin{equation}
\pathenergy[\image]:=\int_0^1\inf_{(v,z)}\int_\domain L[v,v]+\frac1\delta z^2 \d x \d t\,,
\label{eq:metamorphosisContModel}
\end{equation}
where the infimum is taken over all pairs $(v,z)$ which fulfill the transport equation $\frac{D}{\partial t}\image=\dot\image+v\cdot\nabla\image= z$.
Here, we consider
\[
L[v,v]:=Dv:Dv+\gamma\Delta^m v\cdot\Delta^m v
\]
with $\gamma>0$ and $2m>1+\frac{n}{2}$.
To formulate this rigorously, one has to take into account the weak material derivative $z\in L^2((0,1),L^2(\domain))$ defined via 
the equation 
\[
\int_0^1 \int_\domain\eta z\d x \d t=-\int_0^1\int_\domain(\partial_t\eta+\div(v\eta))\image\d x \d t
\]
for all $\eta\in C^{\infty}_c((0,1)\times\domain)$.
Geodesic curves are defined as minimizers of the path energy \eqref{eq:metamorphosisContModel}.
Under suitable assumptions, one can prove the existence of a \emph{geodesic curve} in the class of all regular curves with prescribed initial and end image.
For the definition of regular curves and the existence proof we refer to \cite{TrYo05a}.
\medskip

In what follows, we consider the time discretization of the path energy \eqref{eq:metamorphosisContModel}
proposed in \cite{BeEf14} adapted to the slightly simpler transport cost $L[\,\cdot\,,\,\cdot\,]$.
To this end, we define for arbitrary images 
$\image,\tilde\image\in L^2(\domain)$ the \emph{discrete matching energy} 
\begin{equation}
\energy[\image,\tilde \image]:=
\min_{\deformation\in\admset}
\left\{
\energy^{D}[\image,\tilde \image,\deformation]:=
\int_\domain|D\deformation-\Id|^2+\gamma|\Delta^m\deformation|^2+\frac{1}{\delta}(\tilde\image\circ\deformation-\image)^2\d x\right\}\,,
\label{eq:WEnerDefinition}
\end{equation}
which is composed of a rescaled thin plate regularization term (first two terms) and a quadratic $L^2(\domain)$-mismatch measure (\cf~\cite[(6.2)]{BeEf14}).
The \emph{set of admissible deformations} $\admset$ is defined as
\begin{equation*}
\admset:=\left\{\deformation\in H^{2m}(\domain,\domain):\deformation-\Id\in H_0^{2m}(\domain,\domain)\right\}\,.
\end{equation*}
\begin{remark}
In \cite{BeEf14}, for every admissible deformation $\deformation$ the weaker boundary condition $\deformation=\Id$ on $\partial\domain$
instead of $\deformation-\Id\in H_0^{2m}(\domain,\domain)$ was assumed. Here, this stronger condition is required for both a higher regularity result (\cf Proposition~\ref{prop:bdryRegularity}) 
and a higher order control of the deformations (\cf \eqref{eq:boundphi}). With these altered boundary conditions the equality
\begin{equation}
\int_\domain|\Delta^m\testDeformation|^2\d x=\int_\domain|D^{2m}\testDeformation|^2\d x\qquad\text{for all }\testDeformation\in H_0^{2m}(\domain,\domain)
\label{eq:normEquality}
\end{equation}
holds true for all $m\geq 1$ (\cf \cite[Section 2.2]{GaGrSw10}). In fact, using integration by parts we exemplarily obtain for $m=1$
\[
\int_\domain|\Delta\testDeformation|^2\d x=
\int_\domain\sum_{i,j=1}^n\partial_i^2\testDeformation\cdot\partial_j^2\testDeformation\d x=
\int_\domain\sum_{i,j=1}^n\partial_i\partial_j\testDeformation\cdot\partial_i\partial_j\testDeformation\d x=
\int_\domain|D^2\testDeformation|^2\d x\,.
\]
\end{remark}
In what follows, we need the existence of minimizers of this particular matching energy $\energy$ for input images $\image,\tilde\image\in L^2(\domain)$
with $\|\tilde\image-\image\|_{L^2(\domain)}$ sufficiently small.
\begin{proposition}[Existence of a minimizing deformation for $\energy$]\label{prop:existenceDeformation}
Let $\image\in L^2(\domain)$ and $2m-\frac{n}{2}>1$. Then there exists a constant $C_\energy>0$ that solely depends on $\gamma$, $\delta$, $\domain$ and $m$ such that for every
$\tilde\image\in L^2(\domain)$ with $\|\tilde\image-\image\|_{L^2(\domain)}\leq C_\energy$ there is a minimizing deformation $\deformation\in\admset$ for $\energy$, \ie
$\energy[\image,\tilde \image]=\energy^{D}[\image,\tilde \image,\deformation]$, and $\deformation$ is a $C^1(\domain)$-diffeomorphism.
\end{proposition}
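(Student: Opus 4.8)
The plan is to run the direct method in the calculus of variations, with the smallness of $\|\tilde\image-\image\|_{L^2(\domain)}$ used exactly to force all low-energy deformations to be $C^1$-diffeomorphisms close to the identity, which is what makes the non-convex $L^2$-mismatch term tractable. Since $\energy^{D}[\image,\tilde\image,\cdot]\geq0$ and $\Id\in\admset$ with $\energy^{D}[\image,\tilde\image,\Id]=\tfrac1\delta\|\tilde\image-\image\|_{L^2(\domain)}^2\leq\tfrac1\delta C_\energy^2$, the value $\energy[\image,\tilde\image]$ is finite and at most $\tfrac1\delta C_\energy^2$. I would take a minimizing sequence $(\deformation_k)_k\subset\admset$, so that $\energy^{D}[\image,\tilde\image,\deformation_k]\leq\tfrac2\delta C_\energy^2$ for all large $k$. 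Since $\Delta^m\Id=0$ for $m\geq1$ and $\deformation_k-\Id\in H_0^{2m}(\domain,\domain)$, the norm identity \eqref{eq:normEquality} together with the iterated Poincaré inequality on $H_0^{2m}(\domain)$ gives $\|\deformation_k-\Id\|_{H^{2m}(\domain)}\leq C_P\|\Delta^m\deformation_k\|_{L^2(\domain)}\leq C_P\gamma^{-1/2}\energy^{D}[\image,\tilde\image,\deformation_k]^{1/2}\leq\kappa\,C_\energy$, with $\kappa$ depending only on $\gamma,\delta,\domain,m$. Because $2m-\tfrac n2>1$, the Sobolev embedding $H^{2m}(\domain)\hookrightarrow C^{1,\alpha}(\overline\domain)$ for a suitable $\alpha>0$ then yields $\|\deformation_k-\Id\|_{C^1(\overline\domain)}\leq\tilde\kappa\,C_\energy$, and I would fix $C_\energy$ small enough that $\tilde\kappa\,C_\energy\leq\tfrac12$. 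Consequently $\|D\deformation_k-\Id\|_{C^0(\overline\domain)}\leq\tfrac12$, so the singular values of $D\deformation_k$ lie in $[\tfrac12,\tfrac32]$ and $\det D\deformation_k$ is bounded in $[2^{-n},(\tfrac32)^n]$ on $\overline\domain$; combined with $\deformation_k=\Id$ on $\partial\domain$ and $\deformation_k(\overline\domain)\subseteq\overline\domain$, a standard argument (contraction principle or global inverse function theorem) shows each $\deformation_k$ is a $C^1$-diffeomorphism of $\domain$ onto itself, so in particular $\tilde\image\circ\deformation_k\in L^2(\domain)$ is well defined.

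Next I would extract a subsequence (not relabelled) with $\deformation_k\weakly\deformation$ in $H^{2m}(\domain)$ from the uniform bound above; since $2m-\tfrac n2>1$, the embedding $H^{2m}(\domain)\hookrightarrow\hookrightarrow C^1(\overline\domain)$ is compact, so also $\deformation_k\to\deformation$ in $C^1(\overline\domain)$. The affine set $\admset$ is closed under weak $H^{2m}$-convergence, hence $\deformation\in\admset$, and the $C^1$-convergence preserves the Jacobian bounds, so $\deformation$ is again a $C^1$-diffeomorphism of $\domain$. For the lower semicontinuity of the regularizing terms, $\int_\domain|D\deformation_k-\Id|^2\d x\to\int_\domain|D\deformation-\Id|^2\d x$ by uniform convergence of $D\deformation_k$, and $\int_\domain|\Delta^m\deformation|^2\d x\leq\liminf_k\int_\domain|\Delta^m\deformation_k|^2\d x$ by weak lower semicontinuity of this nonnegative convex quadratic form on $H^{2m}(\domain)$.

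The remaining and most delicate term is the $L^2$-mismatch, the subtlety being that $\tilde\image$ is merely square integrable. Here I would use the change of variables $y=\deformation_k(x)$ and the uniform two-sided Jacobian bound to obtain $\|w\circ\deformation_k\|_{L^2(\domain)}^2\leq 2^n\|w\|_{L^2(\domain)}^2$ for all $w\in L^2(\domain)$ and all $k$ (and likewise for $\deformation$), so that $w\mapsto w\circ\deformation_k$ is a family of operators on $L^2(\domain)$ with norm bounded independently of $k$. For $w\in C(\overline\domain)$, uniform convergence $\deformation_k\to\deformation$ and uniform continuity of $w$ give $w\circ\deformation_k\to w\circ\deformation$ in $C(\overline\domain)$, hence in $L^2(\domain)$; approximating $\tilde\image$ in $L^2(\domain)$ by continuous functions and invoking the uniform operator bound, a $3\varepsilon$-argument upgrades this to $\tilde\image\circ\deformation_k\to\tilde\image\circ\deformation$ in $L^2(\domain)$, whence $\int_\domain(\tilde\image\circ\deformation_k-\image)^2\d x\to\int_\domain(\tilde\image\circ\deformation-\image)^2\d x$. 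Combining the three limits gives $\energy^{D}[\image,\tilde\image,\deformation]\leq\liminf_k\energy^{D}[\image,\tilde\image,\deformation_k]=\energy[\image,\tilde\image]$, so $\deformation$ is a minimizing deformation and, as already shown, a $C^1(\domain)$-diffeomorphism. I expect the main obstacle to be exactly this mismatch term: one has to push the a priori estimate far enough — which is why $C_\energy$ must be small — to turn the minimizing sequence into genuine diffeomorphisms with uniformly controlled Jacobians, since otherwise $\tilde\image\circ\deformation_k$ need not even define an $L^2$ function; once that is in place, the regularizing terms and the compactness arguments are routine.
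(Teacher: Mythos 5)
Your proposal is correct and follows essentially the same route as the paper's proof: the direct method with $\Id$ as competitor to get the a priori $H^{2m}$ bound via \eqref{eq:normEquality} and the iterated Poincar\'e inequality, the Sobolev embedding into $C^{1,\alpha}(\overline\domain)$ combined with smallness of $C_\energy$ to force uniform Jacobian bounds and the diffeomorphism property, weak $H^{2m}$ plus strong $C^1$ compactness, lower semicontinuity of the quadratic regularizers, and a density-in-$L^2$ plus uniformly-bounded-composition-operator ($3\varepsilon$) argument for the mismatch term. The only cosmetic difference is that you prove strong convergence of the $\int_\domain|D\deformation_k-\Id|^2\d x$ term rather than mere lower semicontinuity, which is harmless.
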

\begin{proof}
The proof is based on the direct method in the calculus of variations. Let $(\deformation^j)_{j\in\N}\subset\admset$ be a minimizing sequence for $\energy^D[\image,\tilde\image,\deformation^j]$
with monotonously decreasing energy and
\begin{equation}
0\leq\inf_{\tilde\deformation\in\admset}\energy^D[\image,\tilde\image,\tilde\deformation]=\lim_{j\rightarrow\infty}\energy^D[\image,\tilde\image,\deformation^j]
\leq\overline{\mathbf{W}}:=\energy^D[\image,\tilde\image,\Id]=\tfrac{1}{\delta}\|\tilde\image-\image\|_{L^2(\domain)}^2\,.
\label{eq:estimatesMinimizing}
\end{equation}
Since $\deformation^j-\Id\in\testSpace$, \eqref{eq:normEquality} and \eqref{eq:estimatesMinimizing} imply
\[
\int_\domain\gamma|D^{2m}(\deformation^j-\Id)|^2\d x=
\int_\domain\gamma|\Delta^m(\deformation^j-\Id)|^2\d x=
\int_\domain \gamma|\Delta^m\deformation^j|^2\d x\leq\frac{1}{\delta}\|\tilde\image-\image\|_{L^2(\domain)}^2\leq\frac{C_\energy^2}{\delta}\,.
\]
Thus, the norm equivalence of $\|\cdot\|_{H^{2m}(\domain)}$
and $|\cdot|_{H^{2m}(\domain)}$ for the space $\testSpace$, which follows by
an iterative application of the Poincar\'e inequality (\cf \cite[Corollary 6.31]{AdFo03}), yields
\begin{equation}
\|\deformation^j-\Id\|_{H^{2m}(\domain)}\leq C\|\tilde\image-\image\|_{L^2(\domain)}\leq CC_\energy\,.
\label{eq:controlDeformation}
\end{equation}
By taking into account the embedding $H^{2m}(\domain)\hookrightarrow C^1(\overline\domain)$ and considering a smaller $C_\energy$ if necessary
we can assume 
\[
\|\det(D\deformation^j)-1\|_{L^\infty(\domain)}\leq C_d
\]
for a constant $C_d\in(0,1)$,
which implies that $\deformation^j$ is $C^1(\domain)$-diffeomorphism (see \cite[Theorem 5.5-2]{Ci88}).
Moreover, since $(\deformation^j)_{j\in\N}$ are uniformly bounded in $H^{2m}(\domain)$ (\cf \eqref{eq:controlDeformation}),
a subsequence (also denoted by $\deformation^j$) converges weakly in $H^{2m}(\domain)$ due to the reflexivity of
this space and (strongly) in $C^{1,\alpha}(\overline\domain)$ for $\alpha\in(0,2m-1-\frac{n}{2})$ to a $C^1(\domain)$-diffeomorphism $\deformation\in\admset$.

Next, we prove the convergence of the $L^2(\domain)$-mismatch terms. To this end, we estimate
\begin{align*}
&\quad \left|\int_\domain|\tilde\image\circ\deformation^j-\image|^2-|\tilde\image\circ\deformation-\image|^2\d x\right|
\leq \int_\domain(|\tilde\image\circ \deformation^j-\image|+|\tilde\image\circ \deformation-\image|)|\tilde\image\circ\deformation^j-\tilde\image\circ\deformation|\d x \\
&\leq\left(\|\tilde\image\circ\deformation^j-\image\|_{L^2(\domain)}+\|\tilde\image\circ\deformation-\image\|_{L^2(\domain)}\right)\|\tilde\image\circ\deformation^j-\tilde\image\circ\deformation\|_{L^2(\domain)}
\leq 2\sqrt{\delta\overline{\mathbf{W}}}\|\tilde\image\circ\deformation^j-\tilde\image\circ\deformation\|_{L^2(\domain)}\,.
\end{align*}
Now, we approximate $\tilde\image$ in $L^2(\domain)$ by a sequence of smooth functions $(\tilde\image_i)_{i\in \N}$
such that $\|\tilde\image-\tilde \image_i\|_{L^2(\domain)}\leq 2^{-i}$. Then,
\begin{equation}
\|\tilde\image\circ\deformation^j-\tilde\image\circ\deformation\|_{L^2(\domain)}
\leq\|\tilde\image\circ\deformation^j-\tilde \image_i\circ\deformation^j\|_{L^2(\domain)}
+\|\tilde\image_i \circ\deformation^j-\tilde \image_i\circ \deformation\|_{L^2(\domain)}
+\|\tilde\image_i \circ\deformation-\tilde\image\circ \deformation\|_{L^2(\domain)}\,.
\label{eq:sumDecomposition}
\end{equation}
Next, applying the transformation formula yields
\[
\|\tilde\image\circ\deformation^j-\tilde \image_i\circ\deformation^j\|_{L^2(\domain)}
\leq\|(\det(D\deformation^j)\circ(\deformation^j)^{-1})^{-1}\|_{L^\infty(\domain)}^\frac{1}{2}\|\tilde\image-\tilde\image_i\|_{L^2(\domain)}
\leq\frac{1}{(1-C_d)^\frac{1}{2}}\|\tilde\image-\tilde\image_i\|_{L^2(\domain)}\,.
\]
Likewise, we can deduce $\|\tilde\image\circ\deformation-\tilde \image_i\circ\deformation\|_{L^2(\domain)}\leq C\|\tilde\image-\tilde\image_i\|_{L^2(\domain)}$.
Furthermore, the middle term in \eqref{eq:sumDecomposition} vanishes for fixed $i$ as $j\rightarrow\infty$. Finally,
using the lower semicontinuity of the first two terms of the energy we get
\[
\energy^D[\image,\tilde\image,\deformation]\leq\liminf_{j\rightarrow\infty}\energy^D[\image,\tilde\image,\deformation^j]\,,
\]
which proves this proposition.
\end{proof}
Following the general approach for the variational time discretization of geodesic calculus in \cite{RuWi12b} and the particular discretization of the metamorphosis model in \cite{BeEf14}, we 
define the discrete path energy $\Pathenergy_K$ on a sequence of $K+1$ images $(\image_0,\ldots,\image_K)\in(L^2(\domain))^{K+1}$  with $K\geq 2$ 
as the weighted sum of the discrete matching energy evaluated at consecutive images, \ie
\begin{equation}\label{eq:pathenergy}
\Pathenergy_K[\image_0,\ldots,\image_K]:=K\sum_{k=1}^K\energy[\image_{k-1},\image_k]\,.
\end{equation}
A $(K+1)$-tuple $(\image_0,\ldots,\image_K)\in(L^2(\domain))^{K+1}$ with given images $\image_0$ and $\image_K$ is defined to be a \emph{discrete geodesic curve} connecting 
$\image_0$ and $\image_K$ if
it minimizes $\Pathenergy_K$ \wrt all other $(K+1)$-tuples with $\image_0$ and $\image_K$  fixed.
For the proof of the existence of discrete geodesics we refer to \cite{BeEf14}.
It is also shown in \cite{BeEf14} that a suitable extension of the discrete path energy $\Pathenergy_K$
$\Gamma$-convergences to the continuous path energy $\pathenergy$.
Let us finally mention that neither the matching deformation in \eqref{eq:WEnerDefinition} nor the discrete geodesic curve defined as the minimizer of 
\eqref{eq:pathenergy} for given input images $\image_0$ and $\image_K$ are necessarily unique.

\section{The time discrete exponential map}\label{sec:expMap}
In this section, we define the discrete exponential map and derive optimality as well as regularity results,
on which the study of existence and uniqueness in Section \ref{sec:ELE} and the algorithm introduced in Section \ref{sec:algo} will be based.

Let us briefly recall the definition of the continuous exponential map on a Riemannian manifold.
Let $\y:[0,1]\rightarrow\manifold$ be the unique geodesic curve for a prescribed initial position $\y(0)=\y_A$ and an initial velocity $\dot{\y}(0)=v$ on a Riemannian manifold~$(\manifold,\metric)$.
The exponential map is then defined as $\exp_{\y_A}(v)=\y(1)$. Furthermore, one easily checks that $\exp_{\y_A}(\frac{k}{K}v)=\y(\frac{k}{K})$ for $0\leq k\leq K$. We refer to  the textbook \cite{Kl95a} for a detailed discussion of the (continuous) exponential map. 
Now, we ask for a time discrete counterpart of the exponential map in the metamorphosis model.
To this end, we consider an image $\image_0$ as the initial data and a second image $\image_1$ such that $\zeta_1 = \image_1-\image_0$ represents
a small variation of the image~$\image_0$. 
This variation $\zeta_1$ is the discrete counterpart of the infinitesimal variation given by the velocity $v$ in the continuous case.
For varying values of $K \geq 2$ we now ask for a discrete geodesic $(\image_0, \image_1, \image_2,\ldots, \image_K)$ described as the minimizer of the discrete path energy \eqref{eq:pathenergy}.
Let us for the time being suppose that this geodesic is unique -- a property to be verified later.
Based on our above observation for the continuous exponential map we define $\Exp{k}_{\ast}(\,\cdot\,)$ as the discrete counterpart of $\exp_{\ast}(\frac{k}{K}\,\cdot\,)$, \ie we set
\[
\Exp{k}_{\image_0}(\zeta_1):=\image_k
\]
for $k=1,\ldots,K$. The definition of the exponential map $\Exp{k}_{\image_0}(\zeta_1)$ does not depend on the number of time steps $K$. Indeed, if
$(\image_0,\image_1, \image_2,\ldots, \image_K)$ is a discrete geodesic, then $(\image_0, \image_1, \image_2,\ldots, \image_L)$ with $L\leq K$ is also a geodesic.
Taking into account $k=2$ we immediately observe that the sequence of discrete exponential maps $(\Exp{k}_{\image_0}(\zeta_1))_{k=1,\ldots}$  can iteratively be defined as follows
\begin{equation}
\Exp{k}_{\image_0}(\zeta_1)=\image_k := \Exp{2}_{\image_{k-2}}(\zeta_{k-1})
\label{eq:recursiveDefExp}
\end{equation}
for $k\geq 2$, where $\zeta_{k-1} = \image_{k-1}-\image_{k-2}$, and for the sake of completeness we define
$\Exp{0}_{\image_0}(\zeta_1)=\image_0$ and $\Exp{1}_{\image_0}(\zeta_1)=\image_1=\image_0+\zeta_1$.
Thus, it essentially remains to compute $\Exp{2}$ for a given input image $\image_{k-2}$ and an image variation $\zeta_{k-1}=\image_{k-1}-\image_{k-2}$ (see Figure \ref{fig:EXPscheme}).
For a detailed discussion of the discrete exponential map in the simpler model of Hilbert manifolds we refer to \cite{RuWi12b}.
The particular challenge here is that the matching energy $\energy$ cannot be evaluated directly, but requires to solve the variational problem \eqref{eq:WEnerDefinition}
for the matching deformation.
\begin{figure}[htb]
\begin{center}
\resizebox{0.9\linewidth}{!}{
\includegraphics{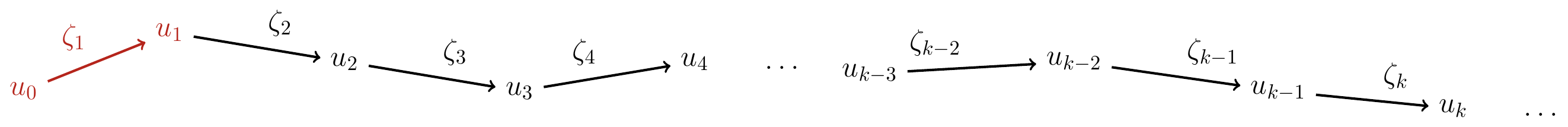}
}
\caption{Schematic drawing of $\Exp{k}_{\image_0}(\zeta_1)$, $k=1,\ldots,K$, the input data is highlighted in red.}
\label{fig:EXPscheme}
\end{center}
\end{figure}

\medskip

There are two major restrictions regarding the input images $\image_0$ and $\image_1$:
\medskip

Firstly,  the existence and uniqueness result for the discrete exponential map (\cf Section \ref{sec:ELE}) will require
weakly differentiable input images. These weak derivatives of images
naturally arise in the Euler--Lagrange equations for $\Exp{2}$ \wrt the deformations (see  \eqref{eq:ELEIImage2} and \eqref{eq:ELEI} below).
Let us remark that the weak differentiability of the input data for the exponential map is also a crucial requirement in the initial value
problem for the geodesic equation in \cite{TrYo05a}. Furthermore, the $H^1(\domain)$-regularity property is inherited along discrete geodesics, 
\ie $\Exp{k}_{\image_0}(\image_1-\image_0)\in H^1(\domain)$ for any $k\geq 1$
provided that $\image_0,\image_1\in H^1(\domain)$ (\cf \cite[Remark 3.3 and Equation (3.2)]{BeEf14}).
\medskip

Secondly, the initial variation $\zeta_1=\image_1-\image_0$ is assumed
to be sufficiently small in $L^2(\domain)$ in order to ensure the existence of the initial deformation $\deformation_1$
and guarantee the convergence of a suitable fixed point algorithm --
a property which appears to be natural in light of the analogue assumption for the continuous exponential map \cite{Kl95a}.
We will also see that for fixed $K$ the variations $\image_{k-1}-\image_{k-2}$ for $k\leq K$ will remain small provided that $\zeta_1$ is small.
Thus, for fixed $K$ the discrete exponential map $\Exp{k}_{\image_0}(\,\cdot\,)$ will be well-posed for a sufficiently small initial variation $\zeta_1$.
\medskip

Hence, in what follows we consider images in $H^1(\domain)$ and define $\image_k:=\Exp{2}_{\image_{k-2}}(\zeta_{k-1})$ 
as the (unique) image in $H^1(\domain)$ such that 
\begin{equation} 
\label{eq:DefExp2}
\image_{k-1}=\argmin_{\image\in H^1(\domain)}\min_{\deformation_{k-1},\deformation_k\in\admset}
\energy^{D}[\image_{k-2},\image,\deformation_{k-1}]+\energy^{D}[\image,\image_{k},\deformation_{k}]\,.
\end{equation}
For the sake of simplicity, we restrict to the first step in the iterative computation of the discrete exponential map with $k=2$.
Given $\image_0,\image_1 \in H^1(\domain)$ the first order optimality conditions for \eqref{eq:DefExp2} for $\image_2\in H^1(\domain)$ 
and $\deformation_1,\deformation_2\in\admset$ read as
\begin{align}
\begin{split}
\partial_{\image_1}(\energy^{D}[\image_0,\image_1,\deformation_1]+\energy^{D}[\image_1,\image_2,\deformation_2])(\testImage)&=0\,,\\
\partial_{\deformation_1}\energy^{D}[\image_0,\image_1,\deformation_1](\testDeformation)&=0\,,\\
\partial_{\deformation_2} \energy^{D}[\image_1,\image_2,\deformation_2](\testDeformation)&=0\,,
\end{split}
\label{eq:abstractOptimalityConditions}
\end{align}
for all $\testImage\in H^1(\domain)$ and all $\testDeformation\in\testSpace$ for $2m-\frac{n}{2}>2$.
The system \eqref{eq:abstractOptimalityConditions} is equivalent to
\begin{align}
\int_\domain(\image_1\circ\deformation_1-\image_0)\testImage\circ\deformation_1-(\image_2\circ\deformation_2-\image_1)\testImage\d x\label{eq:ELEIImage1}&=0\,,\\ 
\int_\domain 2D\deformation_1:D\testDeformation+2\gamma\Delta^m\deformation_1\cdot\Delta^m\testDeformation+\frac{2}{\delta}(\image_1\circ\deformation_1-\image_0)(\nabla\image_1\circ\deformation_1)\cdot\testDeformation\d x&=0\,,\label{eq:ELEIImage2}\\ 
\int_\domain 2D\deformation_2:D\testDeformation+2\gamma\Delta^m\deformation_2\cdot\Delta^m\testDeformation+\frac{2}{\delta}(\image_2\circ\deformation_2-\image_1)(\nabla\image_2\circ\deformation_2)\cdot\testDeformation\d x&=0\,.
\label{eq:ELEI}
\end{align}
The subsequent lemma provides a reformulation of the above system of equations, in which the dependency of the unknown function $\image_2$ in 
\eqref{eq:ELEI} is removed and in addition solely the function $\image_1$ and no longer derivatives of $\image_1$ appear.
\begin{lemma}[Reformulation of the Euler--Lagrange equation for $\deformation_2$]\label{Lemma:reform}
Let $\image_0,\image_1,\image_2\in H^1(\domain)$ such that (\cf Proposition~\ref{prop:existenceDeformation})
\[
\|\image_1-\image_0\|_{L^2(\domain)},\|\image_2-\image_1\|_{L^2(\domain)}\leq C_\energy\,,
\]
$2m-\frac{n}{2}>2$, and assume that \eqref{eq:ELEIImage1} and \eqref{eq:ELEIImage2} hold true.
Let $\deformation_i$ with $i=1,2$ be the minimizer of $\energy^D[\image_{i-1},\image_i,\,\cdot\,]$ on $\admset$ according to Proposition \ref{prop:existenceDeformation}.
\begin{enumerate}[label=(\roman*)]
\item\label{item:firstReformulation}
Then \eqref{eq:ELEI} is equivalent to
\begin{align}
\begin{split}
\int_\domain 2\gamma\Delta^m\deformation_2\cdot\Delta^m\testDeformation+2D\deformation_2:D\testDeformation+\frac{2}{\delta}(\image_1\circ\deformation_1-\image_0)(\nabla\image_1\cdot(D\deformation_2)^{-1}\testDeformation)\circ \deformation_1&\\
+\frac{1}{\delta} \frac{(\image_1\circ \deformation_1-\image_0)^2}{\det D\deformation_1} \left((D\deformation_2)^{-T}:(D^2\deformation_2(D\deformation_2)^{-1}\testDeformation)-(D\deformation_2)^{-T}:D\testDeformation\right)\circ\deformation_1\d x&=0
\end{split}
\label{eq:ELEII}
\end{align}
for all $\testDeformation\in\testSpace$.
\item\label{item:secondReformulation}
Under the additional assumptions that $\partial\domain\in C^{4m}$ and  $\image_0,\image_1,\image_2\in L^\infty(\domain)\cap H^1(\domain)$ the equation
\eqref{eq:ELEI} is equivalent to
\begin{align}
\begin{split}
\int_\domain& 2\gamma\Delta^m\deformation_2\cdot\Delta^m\testDeformation+2D\deformation_2:D\testDeformation\d x\\
=\int_\domain&2\gamma\Delta^m \deformation_1\cdot\Delta^m(((D\deformation_2)^{-1}\testDeformation)\circ\deformation_1)+
2D\deformation_1:D(((D\deformation_2)^{-1}\testDeformation)\circ\deformation_1)\\
&-\frac{1}{\delta} \frac{(\image_1\circ \deformation_1-\image_0)^2}{\det D\deformation_1}\left((D\deformation_2)^{-T}:(D^2\deformation_2(D\deformation_2)^{-1}\testDeformation)-(D\deformation_2)^{-T}:D\testDeformation\right)\circ\deformation_1\d x\,.
\end{split}
\label{eq:ELEIII}
\end{align}
Here, the notation
$(D^2\deformation_2(D\deformation_2)^{-1}\testDeformation)_{jk}=\sum_{i,l=1}^n\partial_j\partial_k\deformation^i_2(D\deformation_2)^{-1}_{il}\testDeformation_l$ is used.
\end{enumerate}
\end{lemma}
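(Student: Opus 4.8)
My plan for \ref{item:firstReformulation} is to leave the elastic terms $\int_\domain 2D\deformation_2:D\testDeformation+2\gamma\Delta^m\deformation_2\cdot\Delta^m\testDeformation\d x$ untouched — they already occur in \eqref{eq:ELEII} — and only transform the mismatch contribution $\frac{2}{\delta}\int_\domain(\image_2\circ\deformation_2-\image_1)(\nabla\image_2\circ\deformation_2)\cdot\testDeformation\d x$, which in the end needs only \eqref{eq:ELEIImage1}. First I would put $w:=\image_2\circ\deformation_2-\image_1\in H^1(\domain)$ and apply the chain rule for composition with the bi-Lipschitz $C^1$-diffeomorphism $\deformation_2$, $\nabla(\image_2\circ\deformation_2)=(D\deformation_2)^T(\nabla\image_2\circ\deformation_2)$, to get $(\nabla\image_2\circ\deformation_2)\cdot\testDeformation=\nabla w\cdot(D\deformation_2)^{-1}\testDeformation+\nabla\image_1\cdot(D\deformation_2)^{-1}\testDeformation$. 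For the $\nabla\image_1$-summand I would use that \eqref{eq:ELEIImage1} is continuous in the test image with respect to the $L^2(\domain)$-norm (by the transformation formula, $\det D\deformation_1$ being bounded below by a positive constant), hence extends from $H^1(\domain)$ to every test image in $L^2(\domain)$; applied to $\testImage=\nabla\image_1\cdot(D\deformation_2)^{-1}\testDeformation$ this converts $\frac{2}{\delta}\int_\domain w\,\nabla\image_1\cdot(D\deformation_2)^{-1}\testDeformation\d x$ into the second term of \eqref{eq:ELEII}.

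For the $\nabla w$-summand, $\frac{2}{\delta}\int_\domain w\,\nabla w\cdot(D\deformation_2)^{-1}\testDeformation\d x=\frac{1}{\delta}\int_\domain\nabla(w^2)\cdot(D\deformation_2)^{-1}\testDeformation\d x$, I would integrate by parts. Here the hypothesis $2m-\tfrac{n}{2}>2$ is exactly what is needed: it gives $\deformation_2\in C^2(\overline\domain)$ and $\testDeformation\in C^2(\overline\domain)$ with $\testDeformation|_{\partial\domain}=0$, so $(D\deformation_2)^{-1}\testDeformation\in W^{1,\infty}_0(\domain)$ and $\div((D\deformation_2)^{-1}\testDeformation)\in L^\infty(\domain)$, while $w^2\in W^{1,1}(\domain)$, so the boundary term drops and the summand equals $-\frac{1}{\delta}\int_\domain w^2\,\div((D\deformation_2)^{-1}\testDeformation)\d x$. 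A short computation from $\partial_p(D\deformation_2)^{-1}=-(D\deformation_2)^{-1}(\partial_p D\deformation_2)(D\deformation_2)^{-1}$ then yields the pointwise identity $-\div((D\deformation_2)^{-1}\testDeformation)=(D\deformation_2)^{-T}:(D^2\deformation_2(D\deformation_2)^{-1}\testDeformation)-(D\deformation_2)^{-T}:D\testDeformation$, with the Hessian tensor contracted against $(D\deformation_2)^{-1}\testDeformation$ as in the statement. Finally I would read off the pointwise form of \eqref{eq:ELEIImage1}, $w=\big(\tfrac{\image_1\circ\deformation_1-\image_0}{\det D\deformation_1}\big)\circ\deformation_1^{-1}$ a.e., and perform the change of variables $x=\deformation_1(z)$ (Jacobian $\det D\deformation_1$) to turn this summand into the third term of \eqref{eq:ELEII}. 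Since every step is an equality between linear functionals of $\testDeformation$, this gives the equivalence \eqref{eq:ELEI}$\Leftrightarrow$\eqref{eq:ELEII}.

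For \ref{item:secondReformulation} I would start from the equivalence just obtained and remove the remaining $\nabla\image_1$ by testing \eqref{eq:ELEIImage2} with $\testDeformationTwo:=((D\deformation_2)^{-1}\testDeformation)\circ\deformation_1$: since $(\nabla\image_1\circ\deformation_1)\cdot\testDeformationTwo=(\nabla\image_1\cdot(D\deformation_2)^{-1}\testDeformation)\circ\deformation_1$, equation \eqref{eq:ELEIImage2} identifies the second term of \eqref{eq:ELEII} with $-\int_\domain 2D\deformation_1:D\testDeformationTwo+2\gamma\Delta^m\deformation_1\cdot\Delta^m\testDeformationTwo\d x$, and substituting this into \eqref{eq:ELEII} and moving the $\deformation_2$-elastic terms to the left produces exactly \eqref{eq:ELEIII}. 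The hard part will be justifying that $\testDeformationTwo$ is admissible, i.e.\ $\testDeformationTwo\in\testSpace$. For this I would invoke elliptic regularity: under $\partial\domain\in C^{4m}$ and $\image_0,\image_1,\image_2\in L^\infty(\domain)\cap H^1(\domain)$ the right-hand side $-\tfrac1\delta(\image_i\circ\deformation_i-\image_{i-1})(\nabla\image_i\circ\deformation_i)$ of the $4m$-th order Euler--Lagrange equation $\gamma\Delta^{2m}\deformation_i-\Delta\deformation_i=-\tfrac1\delta(\image_i\circ\deformation_i-\image_{i-1})(\nabla\image_i\circ\deformation_i)$ lies in $L^2(\domain)$, so the $H^{2m}_0$ boundary conditions lift the minimizers to $\deformation_1,\deformation_2\in H^{4m}(\domain,\domain)$; then $(D\deformation_2)^{-1}$ has entries in $H^{4m-1}(\domain)\subset H^{2m}(\domain)$, the Banach-algebra property of $H^{2m}(\domain)$ (valid for $2m>\tfrac{n}{2}$) gives $(D\deformation_2)^{-1}\testDeformation\in H^{2m}(\domain)$, and composing with the $H^{4m}$-diffeomorphism $\deformation_1$, which coincides with $\Id$ to high order on $\partial\domain$, keeps $\testDeformationTwo$ in $H^{2m}(\domain)$ and preserves the vanishing boundary data. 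This regularity chain — the elliptic bootstrap together with the product and higher-order-chain-rule estimates needed to land $\testDeformationTwo$ in precisely $H^{2m}_0(\domain)$ — is the main obstacle; everything else reduces to routine manipulations with the chain rule, integration by parts and the transformation formula.
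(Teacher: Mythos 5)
Your argument is correct, and for part \ref{item:firstReformulation} it takes a genuinely different route from the paper. The paper pulls the mismatch term of $\energy^D[\image_1,\image_2,\cdot]$ back by $\deformation_2^{-1}$ via the transformation formula, computes the first variation of the resulting functional (which requires differentiating $\deformation_2^{-1}$ and $\det(D\deformation_2)\circ\deformation_2^{-1}$ with respect to $\deformation_2$, producing the cofactor terms), transforms back, and only then inserts the pointwise consequence \eqref{eq:pointwiseImageUpdate} of \eqref{eq:ELEIImage1}. You instead stay entirely in the configuration of \eqref{eq:ELEI}: the chain-rule split $(\nabla\image_2\circ\deformation_2)\cdot\testDeformation=\nabla w\cdot(D\deformation_2)^{-1}\testDeformation+\nabla\image_1\cdot(D\deformation_2)^{-1}\testDeformation$ with $w=\image_2\circ\deformation_2-\image_1$, the observation that \eqref{eq:ELEIImage1} extends by $L^2$-continuity to the test image $\nabla\image_1\cdot(D\deformation_2)^{-1}\testDeformation$, and the integration by parts of $\nabla(w^2)$ against the vector field $(D\deformation_2)^{-1}\testDeformation$ replace the variation of the inverse map by a single elementary divergence computation; your identity for $-\div\bigl((D\deformation_2)^{-1}\testDeformation\bigr)$ reproduces exactly the term the paper obtains from $\partial_A\det(A)(B)=\cof(A):B$ (note that the natural contraction of the Hessian with $(D\deformation_2)^{-1}\testDeformation$ along a derivative index, which both derivations actually produce, does not quite match the index convention displayed after \eqref{eq:ELEIII}, which appears to contract along the component index). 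Your route has the merit of making explicit that \eqref{eq:ELEIImage1} enters twice (once in weak form with an $L^2$ test image, once pointwise) and of isolating the only analytic prerequisites ($w\in H^1(\domain)$, hence $w^2\in W^{1,1}(\domain)$, and $(D\deformation_2)^{-1}\testDeformation\in C^1(\overline\domain)$ vanishing on $\partial\domain$), while the paper's pull-back derivation makes the substitution of \eqref{eq:pointwiseImageUpdate} more transparent. For part \ref{item:secondReformulation} your argument coincides with the paper's: test \eqref{eq:ELEIImage2} with $\testDeformationTwo=((D\deformation_2)^{-1}\testDeformation)\circ\deformation_1$ and add the result to \eqref{eq:ELEII}; the admissibility $\testDeformationTwo\in\testSpace$ is justified in the paper by exactly the elliptic lifting you describe (Proposition~\ref{prop:bdryRegularity}, where the $L^\infty$-assumption on the images makes the right-hand side of the strong Euler--Lagrange equation an $L^2$ function), so your identification of this as the main obstacle, and your treatment of it, match the paper's.
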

\begin{proof}
By using the transformation formula the energy $\energy^D$ can be rewritten as follows
\[
\energy^D[\image_1,\image_2,\deformation_2]=\int_\domain 
|D\deformation_2-\Id|^2+\gamma|\Delta^m\deformation_2|^2+\frac{1}{\delta}\frac{(\image_2-\image_1\circ\deformation_2^{-1})^2}{\det(D\deformation_2)\circ\deformation_2^{-1}}\d x\,,
\]
since $\deformation_2\in\admset$ is a diffeomorphism (see Proposition~\ref{prop:existenceDeformation}).
As a next step, we rewrite the Euler--Lagrange equation \wrt $\deformation_2$ of $\energy^D[\image_1,\image_2,\,\cdot\,]$. 
To this end, we use the identities $\partial_{\deformation_2}\deformation_2^{-1}(\testDeformation)=-((D\deformation_2)^{-1}\testDeformation)\circ\deformation_2^{-1}$,
which follows by differentiating $(\deformation_2+\epsilon\testDeformation)\circ(\deformation_2+\epsilon\testDeformation)^{-1}=\Id$ \wrt $\epsilon$,
and $\partial_A\det(A)(B)=\cof(A):B$ for $A\in GL(n)$ and $B\in\R^{n,n}$ with $\cof A = (\det A)A^{-T}$.
Thus, we obtain
\begin{align*}
\int_\domain 2D\deformation_2:D\testDeformation+2\gamma\Delta^m\deformation_2\cdot\Delta^m\testDeformation
+\frac{2}{\delta}(\image_2-\image_1\circ\deformation_2^{-1})\frac{(\nabla\image_1\cdot(D\deformation_2)^{-1}\testDeformation)\circ\deformation^{-1}_2}{\det(D\deformation_2)\circ\deformation^{-1}_2}& \\
+\frac{1}{\delta}\frac{(\image_2-\image_1\circ\deformation_2^{-1})^2}{(\det D\deformation_2)^2\circ \deformation_2^{-1}}\left(\cof D\deformation_2 : (D^2\deformation_2(D\deformation_2)^{-1}\testDeformation)-\cof D\deformation_2:D\testDeformation\right)\circ\deformation_2^{-1}\d x&=0\,.
\end{align*}
A further application of the transformation formula \wrt $\deformation_2$ yields
\begin{align}
\int_\domain 2D\deformation_2:D\testDeformation+2\gamma\Delta^m\deformation_2\cdot\Delta^m \testDeformation+
\frac{2}{\delta}(\image_2\circ\deformation_2-\image_1)\nabla\image_1\cdot(D\deformation_2)^{-1}\testDeformation&\notag\\
+\frac{1}{\delta}\frac{(\image_2\circ\deformation_2-\image_1)^2}{\det D\deformation_2}
\left(\cof D\deformation_2 :(D^2\deformation_2(D\deformation_2)^{-1}\testDeformation)-\cof D\deformation_2 : D\testDeformation\right)\d x&=0\,.
\label{eq:longExpressionDeformation}
\end{align}
To remove the dependency of the function $\image_2$ above, we employ the pointwise condition
\begin{equation}
\image_2\circ\deformation_2-\image_1=\frac{\image_1-\image_0\circ\deformation_1^{-1}}{\det(D\deformation_1)\circ\deformation_1^{-1}}
\label{eq:pointwiseImageUpdate}
\end{equation}
for a.e. $x\in\domain$, which follows directly from \eqref{eq:ELEIImage1}. Inserting this in \eqref{eq:longExpressionDeformation} and using the integral transformation formula we achieve 
\begin{align*}
\int_\domain 2D\deformation_2:D\testDeformation+2\gamma\Delta^m\deformation_2\cdot\Delta^m\testDeformation
+\frac{2}{\delta}(\image_1\circ\deformation_1-\image_0)(\nabla\image_1\cdot(D\deformation_2)^{-1}\testDeformation)\circ\deformation_1&\\
+\frac{1}{\delta}\frac{(\image_1\circ \deformation_1-\image_0)^2}{\det D\deformation_1}\left(\frac{\cof D\deformation_2:(D^2\deformation_2(D\deformation_2)^{-1}\testDeformation)-\cof D\deformation_2 : D\testDeformation}{\det D\deformation_2}\right)\circ\deformation_1\d x&=0\,. 
\end{align*}
The identity $\cof(A)=\det(A)A^{-T}$ for $A\in GL(n)$ implies \ref{item:firstReformulation}.

To show \ref{item:secondReformulation}, we take into account the test function $\testDeformationTwo:=((D\deformation_2)^{-1}\testDeformation)\circ\deformation_1$ in
\eqref{eq:ELEIImage2}. To justify this, we have to show that $\testDeformationTwo\in\testSpace$. To this end,
we require $H^{2m+1}(\domain)$-regularity of $\deformation_2$, which will follow from Proposition \ref{prop:bdryRegularity},
and classical differential calculus for Sobolev functions \cite{AdFo03}.
Inserting $\testDeformationTwo$ into \eqref{eq:ELEIImage2} we get
\[
-\int_\domain\frac{2}{\delta}(\image_1\circ\deformation_1-\image_0)(\nabla\image_1\cdot(D\deformation_2)^{-1}\testDeformation)\circ\deformation_1\d x
=\int_\domain 2\gamma\Delta^m\deformation_1\cdot\Delta^m(((D\deformation_2)^{-1}\testDeformation)\circ\deformation_1)+2D\deformation_1:D(((D\deformation_2)^{-1}\testDeformation)\circ\deformation_1)\d x\,.
\]
By adding the above equation to \eqref{eq:ELEII} we have proven \ref{item:secondReformulation}.
\end{proof}
\begin{proposition}[Maximal regularity of the deformations]\label{prop:bdryRegularity}
Let $2m-\frac{n}{2}>2$ and $\partial\domain\in C^{4m}$. Furthermore, let $\image_0,\image_1,\image_2\in L^\infty(\domain)\cap H^1(\domain)$
and suppose that $\deformation_1,\deformation_2\in\admset$ are minimizers of $\energy^D[\image_0,\image_1,\,\cdot\,]$ and $\energy^D[\image_1,\image_2,\,\cdot\,]$, respectively.
Then $\deformation_1,\deformation_2\in\admset\cap H^{4m}(\domain)$.
\end{proposition}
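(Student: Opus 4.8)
The plan is to recognize each minimizing deformation $\deformation_i$, $i\in\{1,2\}$, as the (unique) weak solution of a linear Dirichlet boundary value problem for the polyharmonic operator $\Delta^{2m}$ with an $L^2(\domain)$ right-hand side, and then to invoke the corresponding elliptic regularity theory. Throughout, Proposition~\ref{prop:existenceDeformation} is used to treat $\deformation_i$ as a $C^1(\domain)$-diffeomorphism with $\|\det(D\deformation_i)-1\|_{L^\infty(\domain)}\leq C_d$ for a constant $C_d\in(0,1)$.

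First I would record the Euler--Lagrange equation satisfied by $\deformation_i$. Since $\deformation_i$ minimizes $\energy^{D}[\image_{i-1},\image_i,\,\cdot\,]$ over $\admset$ and is a $C^1$-diffeomorphism with $\det(D\deformation_i)$ bounded away from $0$, the first variation in every direction $\testDeformation\in\testSpace$ exists and gives, exactly as in \eqref{eq:ELEIImage2} resp.\ \eqref{eq:ELEI},
\begin{equation*}
\int_\domain 2D\deformation_i:D\testDeformation+2\gamma\,\Delta^m\deformation_i\cdot\Delta^m\testDeformation+\tfrac{2}{\delta}(\image_i\circ\deformation_i-\image_{i-1})(\nabla\image_i\circ\deformation_i)\cdot\testDeformation\d x=0\qquad\text{for all }\testDeformation\in\testSpace\,.
\end{equation*}
The key point is that the force term $h_i:=-\tfrac{2}{\delta}(\image_i\circ\deformation_i-\image_{i-1})(\nabla\image_i\circ\deformation_i)$ lies in $L^2(\domain)$: indeed $\image_{i-1},\image_i\in L^\infty(\domain)$ and $\deformation_i$ is a $C^1$-diffeomorphism, so $\image_i\circ\deformation_i-\image_{i-1}\in L^\infty(\domain)$, while $\nabla\image_i\in L^2(\domain)$ and the change of variables $x\mapsto\deformation_i(x)$, with Jacobian bounded below by $1-C_d>0$, yields $\nabla\image_i\circ\deformation_i\in L^2(\domain)$; the product of an $L^\infty$ and an $L^2$ function is in $L^2$.

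Next I would rewrite this identity as a polyharmonic Dirichlet problem. Integrating the first term by parts (legitimate since $\testDeformation\in H_0^{2m}(\domain)\subset H^1_0(\domain)$ and $\Delta\deformation_i\in H^{2m-2}(\domain)\subset L^2(\domain)$ because $2m-\tfrac{n}{2}>2$) turns the equation into
\begin{equation*}
\int_\domain 2\gamma\,\Delta^m\deformation_i\cdot\Delta^m\testDeformation\d x=\int_\domain\big(2\Delta\deformation_i+h_i\big)\cdot\testDeformation\d x\qquad\text{for all }\testDeformation\in\testSpace\,,
\end{equation*}
with $g_i:=2\Delta\deformation_i+h_i\in L^2(\domain)$. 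Since $\Id$ is affine, $\Delta^m\Id=0$, hence $w_i:=\deformation_i-\Id\in H_0^{2m}(\domain,\domain)$ is precisely the weak solution of the homogeneous Dirichlet problem $2\gamma\,\Delta^{2m}w_i=g_i$ in $\domain$; this is unambiguous because the bilinear form $(u,v)\mapsto\int_\domain 2\gamma\,\Delta^m u\cdot\Delta^m v\d x$ is coercive on $\testSpace$ by \eqref{eq:normEquality} and an iterated Poincar\'e inequality (as in the proof of Proposition~\ref{prop:existenceDeformation}). Elliptic regularity for the polyharmonic operator with homogeneous Dirichlet boundary conditions on a domain with $\partial\domain\in C^{4m}$ (see \cite{GaGrSw10}, and \cite{AdFo03}) upgrades a right-hand side $g_i\in L^2(\domain)=H^0(\domain)$ to a solution $w_i\in H^{4m}(\domain)$, and therefore $\deformation_i\in\admset\cap H^{4m}(\domain)$ for $i=1,2$.

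I expect the main obstacle to lie not in the elliptic theory but in the two ``soft'' ingredients: justifying that the minimizer is regular enough for its first variation to be the stated weak equation (which rests on Proposition~\ref{prop:existenceDeformation} providing a $C^1$-diffeomorphism and on the chain rule for the composition of an $H^1$ function with a bi-Lipschitz map, so that $\nabla\image_i\circ\deformation_i$ is meaningful and square-integrable), and matching the functional-analytic boundary condition $\deformation_i-\Id\in H_0^{2m}(\domain)$ with the Dirichlet data required by the polyharmonic regularity theorem on a $C^{4m}$ boundary. I would also point out that the argument cannot be bootstrapped beyond $H^{4m}(\domain)$, since $h_i$ is only $L^2$ because $\nabla\image_i$ is merely square-integrable; this is exactly why $H^{4m}(\domain)$ is the \emph{maximal} regularity asserted.
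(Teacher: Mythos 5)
Your proposal is correct and follows essentially the same route as the paper: write the Euler--Lagrange equation for $\deformation_i$, integrate the $D\deformation_i:D\testDeformation$ term by parts so that the whole right-hand side becomes an $L^2(\domain,\R^n)$ functional tested against $\testDeformation$, and then invoke the $L^2$-regularity theory for the polyharmonic Dirichlet problem from \cite[Section 2.5.2]{GaGrSw10} to conclude $\deformation_i-\Id\in H^{4m}(\domain)$. The only difference is that you spell out why the force term lies in $L^2(\domain)$ (via the $L^\infty$ bound on the images and the transformation formula for $\nabla\image_i\circ\deformation_i$), a point the paper asserts without detail.
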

\begin{proof}

We only prove the result for $\deformation_2$, for $\deformation_1$ one proceeds analogously.
Let  $w$ be the displacement associated with $\deformation_2$, \ie $w=\deformation_2-\Id\in\testSpace$.
Using integration by parts in \eqref{eq:ELEI} we obtain for a test function  $\testDeformation\in\testSpace$
\begin{align*}
&\quad\int_{\domain}\Delta^m w\cdot\Delta^m\testDeformation\d x
=-\int_{\domain}\tfrac{1}{\gamma\delta}(\image_2\circ\deformation_2-\image_1)((\nabla\image_2\circ\deformation_2)\cdot\testDeformation)+\tfrac{1}{\gamma}D\deformation_2:D\testDeformation\d x\notag\\
&=-\int_{\domain}\tfrac{1}{\gamma\delta}(\image_2\circ\deformation_2-\image_1)((\nabla\image_2\circ\deformation_2)\cdot\testDeformation)-\tfrac{1}{\gamma}\Delta\deformation_2\cdot\testDeformation\d x
=:\int_{\domain}f\cdot\testDeformation\d x
\end{align*}
with $f\in L^2(\domain,\R^n)$. 
Then, the assertion follows from the general $L^2$-regularity theory for polyharmonic equations as presented in \cite[Section 2.5.2]{GaGrSw10}.
\end{proof}

\begin{remark}
Since $\deformation_2$ is a diffeomorphism, \eqref{eq:pointwiseImageUpdate} is equivalent to
\begin{equation}
\image_2=\left(\frac{\image_1-\image_0\circ\deformation_1^{-1}}{\det(D\deformation_1)\circ\deformation_1^{-1}}\right)\circ\deformation_2^{-1}+\image_1\circ\deformation_2^{-1}\,.
\label{eq:pointwiseImageUpdatealternative}
\end{equation}
Here, the first summand reflects the intensity modulation along the geodesic, the second summand
quantifies the contribution due to the transport.
\end{remark}
We will use the first reformulation \eqref{eq:ELEII} (Lemma~\ref{Lemma:reform}~\ref{item:firstReformulation}) of the Euler--Lagrange equation \eqref{eq:ELEI} with respect to $\deformation_2$ to derive a fixed point iteration in the existence proof 
for the time discrete exponential map. The second reformulation \eqref{eq:ELEIII} (Lemma~\ref{Lemma:reform}~\ref{item:secondReformulation})
will later be used in a modified and spatially discrete fixed point iteration in the numerical algorithm.

\section{Local existence and uniqueness of the discrete exponential map}\label{sec:ELE}
In this section, we prove local existence and local uniqueness for the discrete exponential map.
At first, we make use of an argument based on Banach's fixed point theorem applied to the reformulation of the Euler--Lagrange equation given in Lemma~\ref{Lemma:reform}~\ref{item:firstReformulation}
for a discrete geodesic $(\image_0,\image_1,\image_2)$  with deformations $\deformation_1$ and $\deformation_2$.
For image pairs $(\image_0,\image_1)$ we establish the existence of a solution $(\image_2,\deformation_1,\deformation_2)$
to the system of equations \eqref{eq:ELEIImage1}, \eqref{eq:ELEIImage2} and \eqref{eq:ELEI}
provided that $\image_0$ and $\image_1$ are bounded in $H^1(\domain)$ and close in $L^2(\domain)$.
This does not necessarily imply that for given $\image_0$ and $\image_1$ the resulting discrete path $(\image_0,\image_1,\image_2)$ is the unique discrete geodesic
connecting $\image_0$ and $\image_2$.
Thus, in a second step we will show that this indeed holds true if the images $\image_0$ and $\image_1$ are close in $H^1(\domain)$.
To this end, we apply an implicit function theorem argument (\cf the corresponding proof for the discrete 
exponential map on Hilbert manifolds given in \cite{RuWi12b}).
Let us remark that this argument also allows to establish existence, but under the stronger assumption that the input images are close in $H^1(\domain)$ compared to
the requirement of closeness in $L^2(\domain)$ and boundedness in $H^1(\domain)$ for the existence proof via the fixed point theorem.
Furthermore, the fixed point approach will be taken into account for the numerical approximation of the time discrete exponential map.

\begin{theorem}[Existence of solutions of the Euler--Lagrange equations]\label{thm:existenceELE}
Let $2m-\frac{n}{2}>2$, $\image_0\in H^1(\domain)$. Then there are constants $C_\image, c_\image > 0$ such that
for every 
$\image_1 \in \left\{\image\in H^1(\domain):\ |\image|_{H^1(\domain)} \leq C_\image,\; \|\image-\image_0\|_{L^2(\domain)}\leq c_\image\right\}$
there exists a solution $(\image_2,\deformation_1,\deformation_2)\in H^1(\domain)\times\admset\times\admset$
of \eqref{eq:ELEIImage1}, \eqref{eq:ELEIImage2} and \eqref{eq:ELEI}. In particular,
the defining system of equations for $\image_2=\Exp{2}_{\image_{0}}(\image_{1}-\image_{0})$ is solved.
\end{theorem}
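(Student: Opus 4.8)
The plan is to keep a minimizing deformation $\deformation_1$ of $\energy^D[\image_0,\image_1,\,\cdot\,]$ fixed, to solve the reformulated Euler--Lagrange equation \eqref{eq:ELEII} -- which for fixed $\image_0,\image_1,\deformation_1$ is an equation in the single unknown $\deformation_2$ -- by Banach's fixed point theorem, and finally to reconstruct $\image_2$ from the pointwise identity \eqref{eq:pointwiseImageUpdatealternative}; this parallels the argument for Hilbert manifolds in \cite{RuWi12b}, the added difficulty here being the low spatial regularity of the images and the nonlinear couplings through $\deformation_2$. Concretely, I would first set $C_\image:=|\image_0|_{H^1(\domain)}+1$ and require $c_\image\leq C_\energy$, so that Proposition~\ref{prop:existenceDeformation} provides a minimizer $\deformation_1\in\admset$ of $\energy^D[\image_0,\image_1,\,\cdot\,]$ which is a $C^1$-diffeomorphism, solves \eqref{eq:ELEIImage2} as its first order optimality condition, and satisfies $\|\deformation_1-\Id\|_{H^{2m}(\domain)}\leq C\,c_\image$, $\|\image_1\circ\deformation_1-\image_0\|_{L^2(\domain)}\leq\|\image_1-\image_0\|_{L^2(\domain)}\leq c_\image$ and $\|\det D\deformation_1-1\|_{L^\infty(\domain)}<1$ (possibly shrinking $c_\image$). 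Since $2m-\tfrac{n}{2}>2$ gives $H^{2m-1}(\domain)\hookrightarrow C^1(\overline\domain)$, also $\det D\deformation_1\circ\deformation_1^{-1}\in C^1(\overline\domain)$ is bounded away from zero; this is used at the very end.

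Writing $\deformation_2=\Id+w$ with $w\in\testSpace$ and using $\Delta^m\Id=0$ together with $\int_\domain\Id:D\testDeformation\,\d x=0$ for $\testDeformation\in\testSpace$, I would recast \eqref{eq:ELEII} as
\[
a(w,\testDeformation):=\int_\domain 2\gamma\,\Delta^m w\cdot\Delta^m\testDeformation+2\,Dw:D\testDeformation\,\d x=\langle\mathcal{N}(w),\testDeformation\rangle\qquad\text{for all }\testDeformation\in\testSpace,
\]
where $\langle\mathcal{N}(w),\testDeformation\rangle$ collects, with a minus sign, the two lower order terms of \eqref{eq:ELEII} evaluated at $\deformation_2=\Id+w$. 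By \eqref{eq:normEquality} and the Poincar\'e inequality, $a$ is a bounded coercive bilinear form on $\testSpace$, hence induces an isomorphism $A\colon\testSpace\to\testSpaceDual$, and the problem becomes the fixed point equation $w=A^{-1}\mathcal{N}(w)=:\mathcal{T}(w)$ on the closed ball $B_\rho:=\{w\in\testSpace:\|w\|_{H^{2m}(\domain)}\leq\rho\}$ for a radius $\rho>0$ to be chosen small.

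The heart of the argument is to show that, after fixing $\rho$ small and then $c_\image$ small (both depending only on $\image_0,\gamma,\delta,\domain,m$), $\mathcal{T}$ maps $B_\rho$ into itself and is a contraction. For $\rho$ small, $D(\Id+w)$ is pointwise invertible on $B_\rho$, and the maps $w\mapsto(D(\Id+w))^{-1}$, $w\mapsto\det D(\Id+w)$ and $w\mapsto D^2(\Id+w)=D^2w$ are bounded and Lipschitz from $H^{2m}(\domain)$ into $C^0(\overline\domain)$ -- this is exactly where $2m-\tfrac{n}{2}>2$ enters, through $H^{2m-2}(\domain)\hookrightarrow C^0(\overline\domain)$ and $H^{2m}(\domain)\hookrightarrow C^1(\overline\domain)$ -- while $\testDeformation\mapsto\testDeformation,D\testDeformation$ embed $\testSpace$ boundedly into $L^\infty(\domain)$. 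The term carrying $\nabla\image_1$ is controlled only in $L^2(\domain)$, but it is multiplied by the $L^2$-small factor $\image_1\circ\deformation_1-\image_0$ and by an $L^\infty$-bounded quantity, so by H\"older's inequality (and a change of variables removing the composition with $\deformation_1$) it is bounded by $C\,c_\image C_\image\|\testDeformation\|_{H^{2m}(\domain)}$; the term with $(\image_1\circ\deformation_1-\image_0)^2\in L^1(\domain)$ is likewise bounded by $C\,c_\image^2(1+\rho)\|\testDeformation\|_{H^{2m}(\domain)}$. Hence $\|\mathcal{N}(w)\|_{\testSpaceDual}\leq C(c_\image C_\image+c_\image^2)$ and, applying the same estimates to differences, $\|\mathcal{N}(w)-\mathcal{N}(w')\|_{\testSpaceDual}\leq C(c_\image C_\image+c_\image^2)\|w-w'\|_{H^{2m}(\domain)}$ on $B_\rho$. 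Choosing $\rho$ and then $c_\image$ so that $C(c_\image C_\image+c_\image^2)$, divided by the coercivity constant of $a$, is at most $\min(\rho,\tfrac{1}{2})$ then makes $\mathcal{T}$ a self-contraction of $B_\rho$ (the first bound yielding the self-mapping, the second the contraction property).

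Banach's fixed point theorem then yields a unique $w^\ast\in B_\rho$ with $\mathcal{T}(w^\ast)=w^\ast$; I would set $\deformation_2:=\Id+w^\ast\in\admset$, which for $\rho$ small is a $C^1$-diffeomorphism with $\|\det D\deformation_2-1\|_{L^\infty(\domain)}<1$, and define $\image_2$ by \eqref{eq:pointwiseImageUpdatealternative}. Using that $\deformation_1^{-1},\deformation_2^{-1}$ are bi-Lipschitz and $\det D\deformation_1\circ\deformation_1^{-1}\in C^1(\overline\domain)$ is bounded away from zero, one checks $\image_2\in H^1(\domain)$; and from $\|\image_1\circ\deformation_1-\image_0\|_{L^2(\domain)}\leq c_\image$ together with continuity of composition in $L^2(\domain)$ (approximating $\image_1$ by smooth functions) one gets $\|\image_2-\image_1\|_{L^2(\domain)}\leq C(c_\image+C_\image\rho)\leq C_\energy$ after a final shrinking of $\rho$. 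By construction \eqref{eq:ELEIImage1} holds, since it is equivalent to the pointwise relation \eqref{eq:pointwiseImageUpdate} used to define $\image_2$; \eqref{eq:ELEIImage2} holds because $\deformation_1$ minimizes $\energy^D[\image_0,\image_1,\,\cdot\,]$; and \eqref{eq:ELEI} holds by Lemma~\ref{Lemma:reform}~\ref{item:firstReformulation}, whose computation -- for the equivalence of \eqref{eq:ELEI} and \eqref{eq:ELEII} -- uses only that $\deformation_1,\deformation_2$ are diffeomorphisms in $\admset$ and that \eqref{eq:ELEIImage1} holds. Thus $(\image_2,\deformation_1,\deformation_2)$ solves the system and $\image_2=\Exp{2}_{\image_0}(\image_1-\image_0)$. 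I expect the third paragraph to be the main obstacle: tracking the $L^2$-only regularity of $\nabla\image_1$ and the nonlinear dependence on $D\deformation_2$ and $D^2\deformation_2$ together with the compositions with $\deformation_1$, and coordinating the three parameters $C_\image,\rho,c_\image$ so that Proposition~\ref{prop:existenceDeformation}, Lemma~\ref{Lemma:reform} and the diffeomorphism property all remain applicable.
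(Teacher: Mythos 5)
Your proposal is correct and follows essentially the same route as the paper: a Banach fixed point argument for $\deformation_2$ based on the reformulation of Lemma~\ref{Lemma:reform}~\ref{item:firstReformulation}, with the leading polyharmonic part inverted via Lax--Milgram (coercivity from \eqref{eq:normEquality} and Poincar\'e), the lower-order terms shown Lipschitz with constant of order $C_\image c_\image + c_\image^2$, and $\image_2$ recovered from \eqref{eq:pointwiseImageUpdatealternative}. Working with the displacement $w=\deformation_2-\Id$ rather than $\deformation_2$ itself is only a cosmetic difference from the paper's $\mathcal{F}=\secondOperator^{-1}\circ\operator$ on $B_\epsilon(\Id)$.
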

\begin{proof} 
We begin with some preparatory considerations.
Let $c_\image\leq C_\energy$ and $\deformation_1\in\argmin_{\deformation\in\admset}\energy^{D}[\image_0,\image_1,\deformation]$ be a minimizing deformation (\cf Proposition \ref{prop:existenceDeformation}).
Following the same line of arguments as for the estimate \eqref{eq:controlDeformation} in the proof of Proposition~\ref{prop:existenceDeformation} we obtain
\begin{equation}
\|\deformation_1-\Id\|_{H^{2m}(\domain)}\leq C\|\image_1-\image_0\|_{L^2(\domain)}\leq Cc_\image\,.
\label{eq:boundphi}
\end{equation}
Furthermore, taking into account 
$\energy^{D}[\image_0,\image_1,\deformation_1]\leq\energy^{D}[\image_0,\image_1,\Id]$ we infer
\begin{equation}
\|\image_1\circ\deformation_1-\image_0\|_{L^2(\domain)}\leq\sqrt{\delta\,\energy^{D}[\image_0,\image_1,\deformation_1]}
\leq\sqrt{\delta\,\energy^{D}[\image_0,\image_1,\Id]}=\|\image_1-\image_0\|_{L^2(\domain)}\leq c_\image\,.
\label{eq:uniformL2}
\end{equation}

\medskip

\noindent
Now, we define the fixed point iteration and prove the contraction property in several steps:
\medskip

\noindent\textit{(i) Defining the fixed point mapping $\mathcal{F}$.}
Using Lemma~\ref{Lemma:reform}~\ref{item:firstReformulation} we define for a fixed deformation $\deformation_1$ the operators
$\operator,\secondOperator:\admset\to\testSpaceDual$ as
\begin{align*}
\operator[\deformation](\testDeformation)=
\int_\domain &-\frac{2}{\delta}(\image_1\circ\deformation_1-\image_0)(\nabla\image_1\cdot(D\deformation)^{-1}\testDeformation)\circ\deformation_1\notag\\
&-\frac{1}{\delta}\frac{(\image_1\circ\deformation_1-\image_0)^2}{\det D\deformation_1 } \left((D\deformation)^{-T}:(D^2\deformation(D\deformation)^{-1}\testDeformation)-(D\deformation)^{-T}:D\testDeformation\right)\circ\deformation_1\d x\,,\\
\secondOperator[\deformation](\testDeformation)=
\int_\domain & 2\gamma\Delta^m\deformation\cdot\Delta^m\testDeformation+2D\deformation:D\testDeformation\d x\notag
\end{align*}
for a diffeomorphism $\deformation\in\admset$ and all $\testDeformation\in\testSpace$, which allows us to reformulate the Euler--Lagrange equation \wrt the deformation $\deformation_2$ 
in \eqref{eq:ELEII} as 
\[
\operator[\deformation_2](\testDeformation)=\secondOperator[\deformation_2](\testDeformation)\,.
\]
Next, we will study the invertibility of the linear operator $\secondOperator$ and the Lipschitz continuity of $\operator$ with a Lipschitz constant
which depends monotonically on $c_\image$ and vanishes for $c_\image\searrow 0$.
This will imply that $\mathcal{F} := \secondOperator^{-1} \circ \operator$ is a contraction for sufficiently small $c_\image$.
The fixed point iteration to compute the unknown deformation~$\deformation_2$ reads as $\deformation^{j+1}=\mathcal{F}[\deformation^j]$ for $j\in\N$ and $\deformation^0=\Id$.

\medskip

\noindent\textit{(ii) Lipschitz continuity of $\operator$.}
In what follows, we assume that 
\[
\deformation,\tilde\deformation\in B_\epsilon(\Id):=\left\{\deformation:\deformation-\Id\in\testSpace,\|\deformation-\Id\|_{H^{2m}(\domain)}<\epsilon\right\}
\]
for a sufficiently small $\epsilon>0$, the dependency of $\epsilon$ on $C_\image$ and $c_\image$ is discussed below.
By the embedding $H^{2m}(\domain)\hookrightarrow C^2(\overline\domain)$ and for $\epsilon$ sufficiently small 
we may assume that
\begin{equation}
\|D\deformation-\Id\|_{L^\infty(\domain)}<\frac{1}{2}\,,\qquad \|\det(D\deformation)-1\|_{L^\infty(\domain)}<\frac{1}{2}
\label{eq:controlDerivatives}
\end{equation}
for all deformations $\deformation$ considered.
Since $\det(D\deformation(x))\geq\frac{1}{2}$ for all $x\in\domain$,
this ensures that such deformations are in $\admset$ and $C^1(\domain)$-diffeomorphisms (see \cite[Theorem 5.5-2]{Ci88}). Furthermore, we obtain
\[
\|\det(D\deformation^{-1})\|_{L^\infty(\domain)}\leq\Big(1-\|\det(D\deformation)-1\|_{L^\infty(\domain)}\Big)^{-1}<2\,,\ 
\|\cof(D\deformation)\|_{L^\infty(\domain)}\leq C\,,\ 
\|(D\deformation)^{-1}\|_{L^\infty(\domain)}\leq C\,,
\]
and deduce from  $(D\deformation)^{-1}=(\det(D\deformation))^{-1}\cof(D\deformation)^{T}$ 
\begin{align}
&\quad\|(D\deformation)^{-1}-(D\tilde{\deformation})^{-1}\|_{L^\infty(\domain)}
=\|(\det(D\deformation))^{-1}\cof(D\deformation)^{T}-(\det(D\tilde\deformation))^{-1}\cof(D\tilde\deformation)^{T}\|_{L^\infty(\domain)}\notag\\
&\leq
\left\|\tfrac{(\cof(D\deformation))^T}{\det(D\deformation)\det(D\tilde\deformation)}\right\|_{L^\infty(\domain)}
\|\det(D\deformation)-\det(D\tilde\deformation)\|_{L^\infty(\domain)}
+\|(\det(D\tilde\deformation))^{-1}\|_{L^\infty(\domain)}
\|\cof(D\deformation)^T-\cof(D\tilde\deformation)^T\|_{L^\infty(\domain)}\notag\\
&\leq C \|\deformation-\tilde\deformation\|_{H^{2m}(\domain)}
\label{eq:estimatesInverse}
\end{align}
for deformations $\deformation,\tilde\deformation\in B_\epsilon(\Id)$.
Thus, using the Cauchy-Schwarz inequality, the transformation formula, \eqref{eq:uniformL2}, \eqref{eq:controlDerivatives} and \eqref{eq:estimatesInverse}
we achieve the following estimate corresponding to the first term of $\operator$:
\begin{align*}
&\quad\Big|\int_\domain(\image_1\circ\deformation_1-\image_0)(\nabla\image_1\circ\deformation_1)\cdot((D\deformation)^{-1}\testDeformation)\circ\deformation_1
-(\image_1\circ\deformation_1-\image_0)(\nabla\image_1\circ\deformation_1)\cdot((D\tilde{\deformation})^{-1}\testDeformation)\circ\deformation_1\d x\Big| \\
&\leq  C \|\image_1\circ\deformation_1-\image_0\|_{L^2(\domain)}|\image_1|_{H^1(\domain)}
\|\det D(\deformation_1^{-1})\|_{L^\infty(\domain)}
\|(D\deformation)^{-1}-(D\tilde{\deformation})^{-1}\|_{L^\infty(\domain)} \|\testDeformation\|_{L^\infty(\domain)}\\
&\leq C C_\image c_\image \|\deformation-\tilde{\deformation}\|_{H^{2m}(\domain)}\|\testDeformation\|_{H^{2m}(\domain)}\,.
\end{align*}
Likewise, for the second term of $\operator$ we obtain by the transformation formula and by the embedding $H^{2m}(\domain)\hookrightarrow C^2(\overline\domain)$
\begin{align*}
&\quad\Big|\int_\domain\frac{(\image_1\circ\deformation_1-\image_0)^2}{\det D\deformation_1} 
\Big((D\deformation)^{-T}:(D^2\deformation(D\deformation)^{-1}\testDeformation)  -(D\tilde{\deformation})^{-T}:(D^2\tilde{\deformation}(D\tilde{\deformation})^{-1}\testDeformation)\\[-1ex]
&\hspace{10em}-(D\deformation)^{-T}:D\testDeformation
+(D\tilde{\deformation})^{-T}:D\testDeformation
\Big)\circ\deformation_1\d x\Big|\\
&\leq 
C c_\image^2 \|(\det D\deformation_1)^{-1}\|_{L^\infty(\domain)}
\Big(\left\|(D\deformation)^{-T}:(D^2\deformation(D\deformation)^{-1}\testDeformation)
-(D\tilde{\deformation})^{-T}:(D^2\tilde{\deformation}(D\tilde{\deformation})^{-1}\testDeformation)\right\|_{L^\infty(\domain)}\\
&\hspace{13em}+\left\|(D\deformation)^{-T}:D\testDeformation
-(D\tilde{\deformation})^{-T}:D\testDeformation\right\|_{L^\infty(\domain)}\Big)\\
&\leq
C c_\image^2 \|\deformation-\tilde\deformation\|_{H^{2m}(\domain)}\|\testDeformation\|_{H^{2m}(\domain)}\,.
\end{align*}
To conclude, for $\|\image_1-\image_0\|_{L^2(\domain)}\leq c_\image$ and $|\image_1|_{H^1(\domain)}\leq C_\image$ the mapping $\operator$ is indeed Lipschitz continuous on $B_{\epsilon}(\Id)\subset\admset$
and the Lipschitz constant is bounded by 
$C(C_\image c_\image+c_\image^2)$.
\medskip

\noindent\textit{(iii) Invertibility of  $\secondOperator$.}
The bilinear form
\[
\widetilde\secondOperator:\testSpace\times\testSpace\rightarrow\R\,,\quad
(\zeta,\testDeformation)\mapsto\int_\domain 2\gamma\Delta^m\zeta\cdot\Delta^m\testDeformation+2D\zeta:D\testDeformation\d x
\]
is bounded in $\testSpace$.
Furthermore, $\widetilde\secondOperator$ is coercive since for any $\testDeformation\in\testSpace$ we obtain
\[
\|\testDeformation\|_{H^{2m}(\domain)}^2\leq
C|\testDeformation|_{H^{2m}(\domain)}^2=
C\int_\domain \Delta^m\testDeformation\cdot\Delta^m\testDeformation\d x
\]
due to \eqref{eq:normEquality} and the iterative application of the Poincar\'e inequality (\cf \cite[Corollary 6.31]{AdFo03}).
Hence, by the Lax-Milgram Theorem (\cf \cite{GiTr92}) there exists for each $z\in\testSpaceDual$ a unique $\zeta\in\testSpace$ such that
$\widetilde\secondOperator[\zeta](\testDeformation)=z(\testDeformation)$ and $\widetilde\secondOperator^{-1}:\testSpaceDual\rightarrow\testSpace$
is a bounded operator. Finally, since $\secondOperator[\deformation]=\widetilde\secondOperator[\deformation-\Id]$ we can infer that $\secondOperator$
is a bounded and invertible operator with inverse $\secondOperator^{-1}[z]=\Id+\widetilde\secondOperator^{-1}[z]$.

\noindent\textit{(iv) Contraction property of $\mathcal{F}$.}
Using the boundedness of $\secondOperator^{-1}$ and the Lipschitz-continuity of $\operator$ we obtain for $\mathcal{F}:=\secondOperator^{-1}\circ\operator$
\[
\|\mathcal{F}[\deformation]-\mathcal{F}[\tilde\deformation]\|_{H^{2m}(\domain)}\leq C\|\operator[\deformation]-\operator[\tilde\deformation]\|_{\testSpaceDual}
\leq C(C_\image c_\image+c_\image^2)\|\deformation-\tilde\deformation\|_{H^{2m}(\domain)}
\]
for $\deformation,\tilde\deformation\in B_{\epsilon}(\Id)$,
which proves that $\mathcal{F}$ is contractive for sufficiently small $C_\image$, $c_\image$ and $\epsilon$.

Next, we prove $\mathcal{F}:B_{\epsilon}(\Id)\rightarrow B_{\epsilon}(\Id)$ for a proper choice of $C_\image$, $c_\image$ and $\epsilon$.
By using the boundedness of $\secondOperator^{-1}$ and $\secondOperator[\Id]=0$ one can infer
\[
\|\mathcal{F}[\Id]-\Id\|_{H^{2m}(\domain)}=\|\secondOperator^{-1}\circ\operator[\Id]-\secondOperator^{-1}\circ\secondOperator[\Id]\|_{H^{2m}(\domain)}
\leq C\|(\operator-\secondOperator)[\Id]\|_{H^{-2m}(\domain)}
\leq C(C_\image c_\image+c_\image^2)\,.
\]
Thus, for any $\deformation\in B_{\epsilon}(\Id)$ one gets
\begin{align*}
&\quad\|\mathcal{F}[\deformation]-\Id\|_{H^{2m}(\domain)}\leq\|\mathcal{F}[\deformation]-\mathcal{F}[\Id]\|_{H^{2m}(\domain)}+\|\mathcal{F}[\Id]-\Id\|_{H^{2m}(\domain)}\\
&\leq C(C_\image c_\image+c_\image^2)\|\deformation-\Id\|_{H^{2m}(\domain)}+C(C_\image c_\image+c_\image^2)
\leq C(C_\image c_\image+c_\image^2)\epsilon+C(C_\image c_\image +c_\image^2)\,.
\end{align*}
Now, choosing $C_\image$, $c_\image$ small enough and $\epsilon$ such that
the conditions in \eqref{eq:controlDerivatives} are satisfied
for any $\deformation\in B_{\epsilon}(\Id)$ 
and for $\deformation_1$,
$\mathcal{F}$ maps $B_\epsilon(\Id)$ onto $B_\epsilon(\Id)$.

Hence, the application of Banach's fixed point theorem proves the existence of a unique deformation 
$\deformation_2$ in $B_{\epsilon}(\Id)\subset\admset$ solving \eqref{eq:ELEII}. 
Then, the unique image $\image_2$ associated with $(\deformation_1,\deformation_2)$ can be computed using the formula \eqref{eq:pointwiseImageUpdatealternative}.
Thus, there exists a solution $(\image_2,\deformation_1,\deformation_2)\in H^1(\domain)\times\admset\times\admset$
of \eqref{eq:ELEIImage1}, \eqref{eq:ELEIImage2} and \eqref{eq:ELEI}, and this solution is unique in a small neighborhood around $(\image_0,\Id,\Id)$.
\end{proof}
\begin{theorem}[Local uniqueness and well-posedness of the discrete exponential map]
Let $2m-\frac{n}{2}>2$ and $\image_0\in H^1(\domain)$.
Then there exist neighborhoods $\imageNeighborhood\subset H^1(\domain)$ of $\image_0$ and 
$\deformationNeighborhood\subset\admset$ of $\Id$ such that for every $\image_2\in\imageNeighborhood$
there exists at most one solution $(\image_1,\deformation_1,\deformation_2)\in \imageNeighborhood\times\deformationNeighborhood\times\deformationNeighborhood$ 
of the equations \eqref{eq:ELEIImage1}-\eqref{eq:ELEI}. In particular, the discrete exponential map is locally well-posed and
\[
\image_2=\Exp{2}_{\image_0}(\image_1-\image_0)\,.
\]
\end{theorem}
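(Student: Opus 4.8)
The plan is to apply the implicit function theorem to a suitably chosen nonlinear map whose zeros encode solutions of \eqref{eq:ELEIImage1}--\eqref{eq:ELEI}, exactly as in the Hilbert-manifold case treated in \cite{RuWi12b}. First I would recast the system in terms of the displacements. By Lemma~\ref{Lemma:reform}~\ref{item:firstReformulation} together with the pointwise update \eqref{eq:pointwiseImageUpdatealternative}, the image $\image_2$ is determined by $(\image_1,\deformation_1,\deformation_2)$; hence the full system is equivalent to a coupled system in the three unknowns $(\image_1,\deformation_1,\deformation_2)$, namely \eqref{eq:ELEIImage2} (for $\deformation_1$), \eqref{eq:ELEI} in its reformulated guise \eqref{eq:ELEII} (for $\deformation_2$), and \eqref{eq:ELEIImage1} now read as a defining relation for $\image_2$. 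Define
\[
\mathcal{G}:H^1(\domain)\times\admset\times\admset\times H^1(\domain)\to H^{-2m}(\domain)\times H^{-2m}(\domain)\times H^{-1}(\domain)
\]
whose components are the left-hand sides of \eqref{eq:ELEIImage2}, \eqref{eq:ELEII} and the weak form of \eqref{eq:ELEIImage1}, with arguments $(\image_1,\deformation_1,\deformation_2;\image_2)$, where I regard $\image_2$ as the parameter. Then $\mathcal{G}(\image_0,\Id,\Id;\image_0)=0$, since for $\image_1=\image_0$ all mismatch terms vanish and $\Id$ trivially solves both deformation equations.

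Next I would show that the partial differential $D_{(\image_1,\deformation_1,\deformation_2)}\mathcal{G}$ at the base point $(\image_0,\Id,\Id;\image_0)$ is a bounded linear isomorphism. Linearizing \eqref{eq:ELEIImage2} at $\deformation_1=\Id$, $\image_1=\image_0$ kills the mismatch contributions (each carries a factor $\image_1\circ\deformation_1-\image_0$, which is zero at the base point), leaving $2\int_\domain\gamma\Delta^m(\delta\deformation_1)\cdot\Delta^m\testDeformation+D(\delta\deformation_1):D\testDeformation\,\d x$ in the $\delta\deformation_1$-direction — precisely the operator $\widetilde\secondOperator$ shown to be coercive, hence invertible by Lax--Milgram, in step (iii) of the proof of Theorem~\ref{thm:existenceELE}. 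The same holds for the linearization of \eqref{eq:ELEII} in the $\delta\deformation_2$-direction, since every term in $\operator$ likewise carries a vanishing prefactor. Finally, the linearization of the weak form of \eqref{eq:ELEIImage1} at the base point is $\int_\domain(\delta\image_1-\delta\image_2)\testImage\,\d x+(\text{terms with prefactor }\image_1\circ\deformation_1-\image_0)=\int_\domain(\delta\image_1-\delta\image_2)\testImage\,\d x$, so it solves uniquely for $\delta\image_1$ given $\delta\image_2$. Thus $D_{(\image_1,\deformation_1,\deformation_2)}\mathcal{G}$ is block-triangular with invertible diagonal blocks $\widetilde\secondOperator$, $\widetilde\secondOperator$, $\mathrm{Id}_{H^1}$, hence invertible; it is bounded because all nonlinear terms involve only compositions with $C^1$-diffeomorphisms, inverses of Jacobians bounded away from zero, and the embedding $H^{2m}(\domain)\hookrightarrow C^2(\overline\domain)$ valid under $2m-\tfrac n2>2$, as used throughout Section~\ref{sec:expMap}.

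The implicit function theorem then yields neighborhoods $\imageNeighborhood\ni\image_0$, $\deformationNeighborhood\ni\Id$ and a $C^1$ solution map $\image_2\mapsto(\image_1(\image_2),\deformation_1(\image_2),\deformation_2(\image_2))$, unique within $\imageNeighborhood\times\deformationNeighborhood\times\deformationNeighborhood$; shrinking if necessary so that $\image_1(\image_2)$ stays in $\imageNeighborhood$ and the smallness hypotheses $|\image_1|_{H^1}\le C_\image$, $\|\image_1-\image_0\|_{L^2}\le c_\image$ of Theorem~\ref{thm:existenceELE} and Proposition~\ref{prop:existenceDeformation} hold gives the at-most-one-solution claim. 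For the final identity, observe that by Theorem~\ref{thm:existenceELE} such a solution exists and, being a critical point of the strictly-convexifiable functional in \eqref{eq:DefExp2} near the base point, realizes $\image_1=\argmin$; hence $\image_2=\Exp{2}_{\image_0}(\image_1-\image_0)$ by definition. I expect the main obstacle to be bookkeeping the regularity of the off-diagonal blocks of $D\mathcal{G}$ — in particular verifying that the composition operators $\deformation\mapsto f\circ\deformation$ and $\deformation\mapsto (D\deformation)^{-1}$ are differentiable from $\admset$ into the relevant dual spaces, which requires the sharp embedding $2m-\tfrac n2>2$ and the diffeomorphism property from Proposition~\ref{prop:existenceDeformation} — and making sure the neighborhoods are chosen consistently with \eqref{eq:controlDerivatives} so that all these operators are well-defined on $\deformationNeighborhood$.
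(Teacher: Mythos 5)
Your overall strategy---apply the implicit function theorem with $\image_2$ as the parameter and invert the linearization at $(\image_0,\Id,\Id)$---is the same as the paper's, but the central step of your argument is wrong. You claim the linearization is block-triangular because ``every term in $\operator$ likewise carries a vanishing prefactor'' $\image_1\circ\deformation_1-\image_0$. That only kills the contributions where the derivative falls on the \emph{other} factor; by the product rule the derivative also falls on the prefactor itself, and that contribution does not vanish. Concretely, differentiating the mismatch term of \eqref{eq:ELEIImage2}, $\tfrac{2}{\delta}(\image_1\circ\deformation_1-\image_0)(\nabla\image_1\circ\deformation_1)\cdot\testDeformation$, in the direction $\testDeformationTwo$ produces at the base point the term $\tfrac{2}{\delta}(\nabla\image_0\cdot\testDeformationTwo)(\nabla\image_0\cdot\testDeformation)$, and analogous nonzero cross-couplings arise between $\delta\image_1$, $\delta\deformation_1$ and $\delta\deformation_2$ in all three equations. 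The paper's explicit computation confirms this: each second variation of the matching term at $(\image_0,\Id,\Id)$ equals $\int_\domain\testDeformationTwo^T\nabla\image_0\nabla\image_0^T\testDeformation\,\d x$, so the full linearized operator contains the coupling $\tfrac{1}{\delta}(\testDeformation_1+\testDeformation_2)^T\nabla\image_0\nabla\image_0^T(\testDeformationTwo_1+\testDeformationTwo_2)$ and is genuinely dense, not triangular. Invertibility must therefore be argued for the coupled system; the paper does this by first eliminating $\image_1$ through the pointwise minimization formula $\image_1\circ\deformation_1=(\image_0+(\image_2\circ\deformation_2\circ\deformation_1)\det D\deformation_1)/(1+\det D\deformation_1)$, reducing to a Hessian $\partial^2_{(\deformation_1,\deformation_2)}\IFTFunctional[\image_0,\Id,\Id]$ in the two deformations alone, and then observing that the extra rank-one coupling is positive semidefinite so that coercivity, and hence Lax--Milgram invertibility, survives.

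A second genuine problem is the regularity of your map $\mathcal{G}$. Its components as you define them contain $\nabla\image_1\circ\deformation_1$ explicitly, so Fr\'echet-differentiating in $\deformation_1$ formally produces $(D^2\image_1\circ\deformation_1)\testDeformationTwo$, which is meaningless for $\image_1\in H^1(\domain)$; the prefactor $\image_1\circ\deformation_1-\image_0$ rescues you only at the single base point, whereas the implicit function theorem needs $C^1$ regularity of $\mathcal{G}$ in a whole neighborhood. This is precisely why the paper works with the reduced energy $\IFTFunctional$ and systematically integrates by parts (``to avoid derivatives of the involved image intensities'') before differentiating, so that all first and second variations are expressed through divergences of test vector fields acting on undifferentiated images. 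Your closing remark correctly identifies this as the delicate point, but it is not mere bookkeeping: without the elimination of $\image_1$ and the integration-by-parts reformulation, the differentiability hypothesis of the implicit function theorem fails for $H^1$ images.
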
 
\begin{proof}
At first, we get rid of the unknown image $\image_1$. 
To this end, we observe that the sum of the two matching terms in $\energy^D[\image_0,\image_1,\deformation_1]+
\energy^D[\image_1,\image_2,\deformation_2]$ can be rewritten as follows:
\[
\int_\domain(\image_1\circ\deformation_1-\image_0)^2+(\image_2\circ\deformation_2-\image_1)^2\d x
=\int_\domain(\image_1\circ\deformation_1-\image_0)^2+(\image_2\circ\deformation_2\circ\deformation_1-\image_1\circ\deformation_1)^2\det D\deformation_1\d x\,.
\]
Therefore, the image $\image_1$ minimizing the above integral is characterized pointwise a.e. on $\domain$ by 
\[
\image_1\circ\deformation_1=\frac{\image_0+(\image_2\circ\deformation_2\circ\deformation_1)\det D\deformation_1}{1+\det D\deformation_1}
\]
and can thus be written as a function $\image_1(\image_2,\deformation_1,\deformation_2)$ of the image $\image_2$ and the  deformations $\deformation_1$ and $\deformation_2$ (we omit the dependence on the image $\image_0$).
Hence, the Euler--Lagrange equations \eqref{eq:ELEIImage1}-\eqref{eq:ELEI} can be reformulated as 
\[
0=\mathcal{K}[\image_2,\deformation_1,\deformation_2]:=\partial_{(\deformation_1,\deformation_2)}\IFTFunctional[\image_2,\deformation_1,\deformation_2]\,,
\]
where $\IFTFunctional$ is a functional on $H^1(\domain) \times \admset \times \admset$ with
\begin{align*}
\IFTFunctional[\image_2, \deformation_1, \deformation_2]&=\energy^D[\image_0,\image_1(\image_2,\deformation_1,\deformation_2),\deformation_1]+
\energy^D[\image_1(\image_2,\deformation_1,\deformation_2),\image_2,\deformation_2]\\
&=
\int_\domain
|D\deformation_1-\Id|^2+\gamma|\Delta^m\deformation_1|^2+|D\deformation_2-\Id|^2+\gamma|\Delta^m\deformation_2|^2
+\frac{\detTerm(D\deformation_1)}{\delta}(\image_2\circ\deformation_2\circ\deformation_1-\image_0)^2\d x
\end{align*}
for $\detTerm(A) = \frac{\det A}{1+\det A}$ and $\mathcal{K} : H^1(\domain) \times \admset \times \admset \to (\testSpace\times \testSpace)'$.
Now, we will show that in a neighborhood of $(\image_0, \Id, \Id)$ one obtains an explicit representation $(\deformation_1,\deformation_2)[\image_2]$ 
for the implicit equation $0 =  \mathcal{K}[\image_2, \deformation_1, \deformation_2]$ via the implicit function theorem.
Hence, for every $\image_2$, which is close to $\image_0$ in $H^1(\domain)$, there exists in a small neighborhood of $\admset \times \admset$ a unique
tuple $(\deformation_1,\deformation_2)$, which solves the above implicit equation. This indeed proves the claim.
To apply the implicit function theorem, we have to show that
$\partial_{(\deformation_1,\deformation_2)}\mathcal{K}[\image_0,\Id,\Id]=
\partial^2_{(\deformation_1,\deformation_2)}\IFTFunctional[\image_0,\Id,\Id]$
is invertible with bounded inverse.
At first, we compute the different components of $\partial^2_{(\deformation_1,\deformation_2)}\IFTFunctional[\image_2, \deformation_1, \deformation_2]$.
For this reason, we focus here on the variation of 
\[
\widetilde\IFTFunctional[\image_2,\deformation_1,\deformation_2]=
\int_\domain\detTerm(D\deformation_1)(\image_2\circ\deformation_2\circ\deformation_1-\image_0)^2\d x\,,
\]
the derivatives of the other components of $\IFTFunctional$ are straightforward.
We use integration by parts to avoid derivatives of the involved image intensities.
For the first variation with respect to $\deformation_1$ and $\deformation_2$ we obtain
\begin{align*}
\partial_{\deformation_1}\widetilde\IFTFunctional[\image_2,\deformation_1,\deformation_2](\testDeformation)
&=\int_\domain(\image_2\circ\deformation_2\circ\deformation_1-\image_0)^2 D\detTerm(D\deformation_1):D\testDeformation\\[-1.5ex]
&\hspace{2.5em}+2\detTerm(D\deformation_1)(\image_2\circ\deformation_2\circ\deformation_1-\image_0)
\nabla(\image_2\circ\deformation_2\circ\deformation_1)\cdot(D\deformation_1)^{-1}\testDeformation\d x\\
&=\int_\domain(\image_2\circ\deformation_2\circ\deformation_1-\image_0)^2 D\detTerm(D\deformation_1):D\testDeformation
-(\image_2\circ\deformation_2\circ\deformation_1)^2\div\left(\detTerm(D\deformation_1)(D\deformation_1)^{-1}\testDeformation\right)\\[-1.5ex]
&\hspace{2.5em}+2(\image_2\circ\deformation_2\circ\deformation_1)\div\left(\image_0\detTerm(D\deformation_1)(D\deformation_1)^{-1}\testDeformation\right)\d x\,,\\
\partial_{\deformation_2}\widetilde\IFTFunctional[\image_2,\deformation_1,\deformation_2](\testDeformation)
&=\int_\domain 2\detTerm(D\deformation_1)(\image_2\circ\deformation_2\circ\deformation_1-\image_0)\nabla(\image_2\circ\deformation_2\circ\deformation_1)\cdot(D(\deformation_2\circ\deformation_1))^{-1}
(\testDeformation\circ\deformation_1)\d x\\
&=\int_\domain 2(\image_2\circ\deformation_2\circ\deformation_1)
\div\left(\image_0\detTerm(D\deformation_1)(D(\deformation_2\circ\deformation_1))^{-1}(\testDeformation\circ\deformation_1)\right) \\[-1.5ex]
&\hspace{2.5em}-(\image_2\circ\deformation_2\circ\deformation_1)^2
\div\left(\detTerm(D\deformation_1)(D(\deformation_2\circ\deformation_1))^{-1}(\testDeformation\circ\deformation_1)\right)\d x
\end{align*}
using the following different versions of the chain rule:
\begin{align*}
(\nabla(\image_2\circ\deformation_2\circ\deformation_1))^T&=(\nabla(\image_2\circ\deformation_2)\circ\deformation_1)^T D\deformation_1\,,\\
\nabla(\image_2\circ\deformation_2\circ\deformation_1)^2&=2(\image_2\circ\deformation_2\circ\deformation_1)\nabla(\image_2\circ\deformation_2\circ\deformation_1)\,,\\
(\nabla(\image_2\circ\deformation_2\circ\deformation_1))^T&=(\nabla\image_2\circ(\deformation_2\circ\deformation_1))^TD(\deformation_2\circ\deformation_1)\,.
\end{align*}
Then, for the second order variations one gets
\begin{align*}
\partial_{\deformation_1}^2\widetilde\IFTFunctional[\image_2,\deformation_1,\deformation_2](\testDeformation,\testDeformationTwo)
&=\int_\domain  2(\image_2\circ\deformation_2\circ\deformation_1-\image_0) \nabla(\image_2\circ\deformation_2)\circ\deformation_1\cdot\testDeformationTwo(D\detTerm(D\deformation_1):D\testDeformation) \\[-1.5ex]
&\hspace{2.5em} +(\image_2\circ\deformation_2\circ\deformation_1-\image_0)^2 D^2\detTerm(D\deformation_1)(D\testDeformation, D\testDeformationTwo)\\[-0.25ex]
&\hspace{2.5em} -2(\image_2\circ\deformation_2\circ\deformation_1)\nabla(\image_2\circ\deformation_2)\circ\deformation_1\cdot\testDeformationTwo\div\left(\detTerm(D\deformation_1)(D\deformation_1)^{-1}\testDeformation\right) \\[-0.25ex]
&\hspace{2.5em} -(\image_2\circ\deformation_2\circ\deformation_1)^2\div\left(\partial_{\deformation_1}(\detTerm(D\deformation_1)(D\deformation_1)^{-1})(\testDeformationTwo)\testDeformation\right)\\[-0.25ex]
& \hspace{2.5em} +2 \nabla(\image_2\circ\deformation_2)\circ\deformation_1\cdot\testDeformationTwo\div\left(\image_0\detTerm(D\deformation_1)(D\deformation_1)^{-1}\testDeformation\right) \\[-0.25ex]
& \hspace{2.5em} +2(\image_2\circ\deformation_2\circ\deformation_1)\div\left(\image_0\partial_{\deformation_1}(\detTerm(D\deformation_1)(D\deformation_1)^{-1})(\testDeformationTwo)\testDeformation\right)\d x\,, \\
\partial_{\deformation_2}^2\widetilde\IFTFunctional[\image_2,\deformation_1,\deformation_2](\testDeformation,\testDeformationTwo)
&=\int_\domain  2\nabla\image_2\circ(\deformation_2\circ\deformation_1)\cdot(\testDeformationTwo\circ\deformation_1)
\div\left(\image_0\detTerm(D\deformation_1)(D(\deformation_2\circ\deformation_1))^{-1}(\testDeformation\circ\deformation_1)\right)\\[-1.5ex]
&\hspace{2.5em} +2(\image_2\circ\deformation_2\circ\deformation_1)
\div\left(\image_0\detTerm(D\deformation_1)\partial_{\deformation_2}((D(\deformation_2\circ\deformation_1))^{-1})(\testDeformationTwo)(\testDeformation\circ\deformation_1)\right)\\[-0.25ex]
&\hspace{2.5em} -2(\image_2\circ\deformation_2\circ\deformation_1)\nabla\image_2\circ(\deformation_2\circ\deformation_1)\cdot(\testDeformationTwo\circ\deformation_1)\div\left(\detTerm(D\deformation_1)(D(\deformation_2\circ\deformation_1))^{-1}(\testDeformation\circ\deformation_1)\right)\\[-0.25ex]
&\hspace{2.5em} -(\image_2\circ\deformation_2\circ\deformation_1)^2\div\left(\detTerm(D\deformation_1)\partial_{\deformation_2}((D(\deformation_2\circ\deformation_1))^{-1})(\testDeformationTwo)(\testDeformation\circ\deformation_1)\right)\d x\,,\\
\partial_{\deformation_1}\partial_{\deformation_2}\widetilde\IFTFunctional[\image_2,\deformation_1,\deformation_2](\testDeformation,\testDeformationTwo)
&=\int_\domain  2\nabla(\image_2\circ\deformation_2)\circ\deformation_1\cdot\testDeformation
\div\left(\image_0\detTerm(D\deformation_1)(D(\deformation_2\circ\deformation_1))^{-1}(\testDeformationTwo\circ\deformation_1)\right)\\[-1.5ex]
&\hspace{2.5em}+2(\image_2\circ\deformation_2\circ\deformation_1)
\div\left(\image_0\partial_{\deformation_1}(\detTerm(D\deformation_1)(D(\deformation_2\circ\deformation_1))^{-1}(\testDeformationTwo\circ\deformation_1))(\testDeformation)\right)\\[-0.25ex]
&\hspace{2.5em}-2(\image_2\circ\deformation_2\circ\deformation_1)\nabla(\image_2\circ\deformation_2)\circ\deformation_1\cdot\testDeformation
\div\left(\detTerm(D\deformation_1)(D(\deformation_2\circ\deformation_1))^{-1}(\testDeformationTwo\circ\deformation_1)\right)\\[-0.25ex]
&\hspace{2.5em}-(\image_2\circ\deformation_2\circ\deformation_1)^2
\div\left(\partial_{\deformation_1}(\detTerm(D\deformation_1)(D(\deformation_2\circ\deformation_1))^{-1}(\testDeformationTwo\circ\deformation_1))(\testDeformation)\right)\d x\,.
\end{align*}
Evaluating the second order variational derivatives at the point $(\image_0,\Id,\Id)$ yields
\begin{align*}
\partial_{\deformation_1}^2\widetilde\IFTFunctional[\image_0,\Id,\Id](\testDeformation,\testDeformationTwo)
&=\int_\domain -\image_0 \nabla\image_0\cdot\testDeformationTwo\div(\testDeformation)
-\image_0^2\div(\partial_{\deformation_1}(\detTerm(D\deformation_1)(D\deformation_1)^{-1})|_{\deformation_1 = \Id}(\testDeformationTwo)\testDeformation)\\[-1.5ex]
&\hspace{2.5em} +\nabla\image_0\cdot\testDeformationTwo\div(\image_0\testDeformation)
+2\image_0\div(\image_0\partial_{\deformation_1}(\detTerm(D\deformation_1)(D\deformation_1)^{-1})|_{\deformation_1 = \Id}(\testDeformationTwo)\testDeformation)\d x\\
&=\int_\domain\testDeformationTwo^T\nabla\image_0\nabla\image_0^T\testDeformation\d x\,,\\
\partial_{\deformation_2}^2\widetilde\IFTFunctional[\image_0,\Id,\Id](\testDeformation,\testDeformationTwo)
&=\int_\domain\nabla\image_0\cdot\testDeformationTwo\div(\image_0\testDeformation)+\image_0\div(\image_0\partial_{\deformation_2}((D(\deformation_2\circ\deformation_1))^{-1})|_{\deformation_2 = \Id}(\testDeformationTwo)(\testDeformation\circ\deformation_1))\\[-1.5ex]
&\hspace{2.5em} -\image_0\nabla\image_0\cdot\testDeformationTwo\div(\testDeformation) 
-\tfrac{1}{2}\image_0^2\div(\partial_{\deformation_2}((D(\deformation_2\circ\deformation_1))^{-1})|_{\deformation_2 = \Id}(\testDeformationTwo)(\testDeformation\circ\deformation_1))\d x\\
&=\int_\domain\testDeformationTwo^T\nabla\image_0\nabla\image_0^T\testDeformation\d x\,,\\
\partial_{\deformation_1}\partial_{\deformation_2}\widetilde\IFTFunctional[\image_0,\Id,\Id](\testDeformation,\testDeformationTwo)
&=\int_\domain \nabla\image_0\cdot\testDeformation\div(\image_0\testDeformationTwo)
+2\image_0\div(\image_0\partial_{\deformation_1}(\detTerm(D\deformation_1)(D(\deformation_2\circ\deformation_1))^{-1}(\testDeformationTwo\circ\deformation_1))|_{\deformation_1 = \Id}(\testDeformation))
\\[-1.5ex]
&\hspace{2.5em} -\image_0\nabla\image_0\cdot\testDeformation\div(\testDeformationTwo) 
-\image_0^2\div(\partial_{\deformation_1}(\detTerm(D\deformation_1)(D(\deformation_2\circ\deformation_1))^{-1}(\testDeformationTwo\circ\deformation_1))|_{\deformation_1 = \Id}(\testDeformation))\d x\\
&=\int_\domain\testDeformationTwo^T\nabla\image_0\nabla\image_0^T\testDeformation\d x\,.
\end{align*}
Here, we have used the following identities, which rely on integration by parts,
\begin{align*}
\int_\domain \image_0^2\div v-2\image_0 \div(\image_0 v)\d x
&=\int_\domain-\nabla\image_0^2\cdot v-2\image_0\div(\image_0 v)\d x\\
&=\int_\domain -2\nabla\image_0\cdot(\image_0 v)-2\image_0\div(\image_0 v)\d x=
\int_\domain 2\image_0\div(\image_0 v)-2\image_0\div(\image_0 v)\d x = 0
\end{align*} 
for any vector field $v\in H^1_0(\domain,\R^n)$.
Altogether, taking also into account the second order variation  of the remaining terms of $\IFTFunctional$ we obtain 
\begin{align*}
\partial^2_{(\deformation_1,\deformation_2)} \IFTFunctional[\image_0, \Id,\Id]((\testDeformation_1,\testDeformation_2),(\testDeformationTwo_1,\testDeformationTwo_2))
=\int_\domain & 2\gamma\Delta^m\testDeformation_1\cdot\Delta^m \testDeformationTwo_1+2D\testDeformation_1:D\testDeformationTwo_1+2\gamma\Delta^m\testDeformation_2\cdot\Delta^m\testDeformationTwo_2\\
&+2D\testDeformation_2:D\testDeformationTwo_2+\frac{1}{\delta}(\testDeformation_1+\testDeformation_2)^T\nabla\image_0\nabla\image_0^T(\testDeformationTwo_1+\testDeformationTwo_2)\d x\,.
\end{align*}
It is straightforward to verify that $\partial^2_{(\deformation_1,\deformation_2)} \IFTFunctional[\image_0,\Id,\Id]$ 
is a continuous bilinear form on $\testSpace\times \testSpace$ by taking into account the estimate 
\[
\left|\int_\domain(\testDeformation_1+\testDeformation_2)^T\nabla\image_0\nabla\image_0^T(\testDeformationTwo_1+\testDeformationTwo_2)\d x\right|
\leq C \|\image_0\|_{H^1(\domain)}^2\|(\testDeformation_1,\testDeformation_2)\|_{H^{2m}(\domain)}\|(\testDeformationTwo_1,\testDeformationTwo_2)\|_{H^{2m}(\domain)}\,,
\]
the coercivity follows by analogous arguments as in the proof of the coercivity of $\secondOperator$ (\cf Theorem \ref{thm:existenceELE}).
Thus, the Lax-Milgram Theorem ensures the required invertibility.
\end{proof}

\section{Spatial discretization and fixed point algorithm}\label{sec:algo}
In what follows, we introduce a spatial discretization scheme as well as an algorithm to compute the discrete exponential map based on the time discrete operator 
$\Exp{2}_{\image_0}(\image_1-\image_0)$ for given images $\image_0$ and $\image_1$. 
Let us recall that the computation of $\Exp{k}$ for $k>2$ requires the iterative application of $\Exp{2}$ as defined in \eqref{eq:recursiveDefExp}.
In explicit, we ask for a numerical approximation of
the matching deformations $\deformation_1$, $\deformation_2$ and the actual succeeding image
$\image_2 = \Exp{2}_{\image_0}(\image_1-\image_0)$ along the shot discrete path.
Here, we restrict to two dimensional images and for the sake of simplicity we assume that the image domain is the unit square, \ie $\domain=(0,1)^2$.
Conceptually, the generalization to three dimensions is straightforward.
As a simplification for the numerical implementation, we restrict to the case $m=1$ despite the theoretical requirement that $m>1+\frac{n}{4}=\frac{3}{2}$. 
Below, we will introduce the space of tensor product cubic splines for the discretization of deformations. For such discrete deformations the reformulation in 
Lemma~\ref{Lemma:reform}~\ref{item:secondReformulation} holds true (the regularity result in Proposition~\ref{prop:bdryRegularity}
is only required for the reformulation in the spatially continuous case).
We experimentally observed that the spatially discretized model ensures sufficient regularity of the deformations to reliably solve the Euler--Lagrange equations numerically.
To sum up, the discrete energy density that we will employ in all numerical computations is given by
\[
\energy^D[\image,\tilde\image,\deformation]=
\int_\domain|D\deformation-\Id|^2+\gamma\Delta\deformation\cdot\Delta\deformation+\frac{1}{\delta}(\tilde\image\circ\deformation-\image)^2\d x
\]
for $\image,\tilde\image\in H^1(\domain)$ and $\deformation\in\admset$.

The algorithm to compute $\deformation_2$ is based on a spatially discrete fixed point iteration similar to the one used in proof of Theorem~\ref{thm:existenceELE}.
In explicit, we follow the derivation of the fixed point mapping in this proof using now the reformulation \ref{item:secondReformulation} instead of \ref{item:firstReformulation}
in Lemma~\ref{Lemma:reform} as a starting point and define
\begin{align*}
\widetilde\operator[\deformation](\testDeformation)=
\int_\domain& 
2\gamma\Delta\deformation_1\cdot\Delta(((D\deformation)^{-1}\testDeformation)\circ\deformation_1)+
2D\deformation_1:D(((D\deformation)^{-1}\testDeformation)\circ\deformation_1)\\
&
-\frac{1}{\delta}\frac{(\image_1\circ \deformation_1-\image_0)^2}{\det D\deformation_1} \left((D\deformation)^{-T}:(D^2\deformation(D\deformation)^{-1}\testDeformation)-(D\deformation)^{-T}:D\testDeformation\right)\circ\deformation_1\d x\\
=\int_\domain&-2\gamma D\Delta\deformation_1:(D((D\deformation)^{-1}\testDeformation)\circ\deformation_1)-2\Delta\deformation_1\cdot((D\deformation)^{-1}\testDeformation)\circ\deformation_1\\
&-\frac{1}{\delta}\frac{(\image_1\circ \deformation_1-\image_0)^2}{\det D\deformation_1} \left((D\deformation)^{-T}:(D^2\deformation(D\deformation)^{-1}\testDeformation)-(D\deformation)^{-T}:D\testDeformation\right)\circ\deformation_1\d x
\end{align*}
for all $\testDeformation\in\testSpace$. Here, we used integration by parts to get the second equality.
This ansatz is numerically beneficial because it avoids the evaluation of gradients of image intensities. In fact,
we experimentally observed that the evaluation of the expression 
$\int_\domain(\image_1\circ\deformation_1-\image_0)(\nabla\image_1\cdot(D\deformation)^{-1}\testDeformation)\circ \deformation_1\d x$ 
appearing in the definition of $\operator$  in proof of Theorem \ref{thm:existenceELE} 
suffers from accuracy problems in the proximity of interfaces of $\image_1$ due to the approximate numerical quadrature.
To further improve the stability of the numerical algorithm with respect to the evaluation of the first integrand,
we additionally rewrite this expression by making use of $A:B=\tr(A^TB)$ as follows
\begin{align*}
\int_\domain D\Delta\deformation_1: D((D\deformation)^{-1}\testDeformation)\circ\deformation_1) \d x
&= \int_\domain D\Delta\deformation_1: ((D\deformation)^{-1}\circ\deformation_1) D(\testDeformation \circ\deformation_1) + 
D\Delta\deformation_1: D((D\deformation)^{-1}\circ\deformation_1) (\testDeformation \circ\deformation_1) \d x \\
&= \int_\domain((D\deformation)^{-T}\circ\deformation_1) D\Delta\deformation_1 :D(\testDeformation\circ\deformation_1)+
D\Delta\deformation_1:D((D\deformation)^{-1}\circ\deformation_1)(\testDeformation\circ\deformation_1)\d x\,.
\end{align*}
The second operator $\secondOperator$ is chosen identically to the one in the proof of Theorem \ref{thm:existenceELE}.
Then, taking into account the identity $\widetilde\operator[\deformation_2](\testDeformation)=\secondOperator[\deformation_2](\testDeformation)$
for all test functions $\testDeformation\in\testSpace$ the modified fixed point equation based on \eqref{eq:ELEIII} reads as
\[
\deformation^{j+1}=\secondOperator^{-1}\circ\widetilde\operator[\deformation^j]
\]
for $j\in\N$.

We use different discrete ansatz spaces for the deformations and the images.
As the discrete ansatz space for deformations we choose the
conforming space of cubic splines $\DeformationSpace \subset C^2(\domain)$. Here, $H=2^{-N}$ with $N\in\N$ denotes the grid size of the underlying
uniform and rectangular mesh, and the basis functions are vector-valued B-splines.
Moreover, we only impose the Dirichlet boundary condition $\Deformation=\Id$ on $\partial\domain$ instead of the stronger
boundary conditions $\Deformation-\Id\in H^2_0(\domain)$ for the discrete deformations $\Deformation\in\DeformationSpace$.
Indeed, we experimentally observed that these Dirichlet boundary conditions allow to reliably compute proper deformations.
The gray value images are approximated with finite element functions in the space~$\ImageSpace$ of piecewise bilinear and
globally continuous functions on $\domain$ with input intensities in the range $[0,1]$.
The underlying grid consists of uniform and quadratic cells with mesh size $h=2^{-M}$ with $M>N$, the index set of all grid nodes is denoted by $\indexImageNode$.
We take into account the usual Lagrange basis functions $\{\Theta^i\}_{i\in\indexImageNode}$ to represent image intensities $\Image \in \ImageSpace$.
In our numerical experiments we set $M=N+1$.

Now, we are in the position to define spatially discrete counterparts of the energy and the operators involved in the fixed point iteration.
We apply a Gaussian quadrature of order $5$ on both meshes.
The discrete energy for $\Image,\tilde\Image\in\ImageSpace$ and $\Deformation\in\DeformationSpace$ is defined as (\cf \eqref{eq:WEnerDefinition}) 
\begin{align*}
\Energy^D[\Image,\tilde\Image,\Deformation]
=&\sum_{c_H}\sum_{q_H}
\quadratureWeight^{c_H}_{q_H} \left((D\Deformation-\Id)(\quadraturePoint^{c_H}_{q_H}):(D\Deformation-\Id)(\quadraturePoint^{c_H}_{q_H})
+\gamma\Delta\Deformation(\quadraturePoint^{c_H}_{q_H})\cdot\Delta\Deformation(\quadraturePoint^{c_H}_{q_H})\right)\\
&+\frac{1}{\delta} \sum_{c_h}\sum_{q_h} \quadratureWeight^{c_h}_{q_h}\left(\tilde\Image(\Deformation(\quadraturePoint^{c_h}_{q_h}))-\Image(\quadraturePoint^{c_h}_{q_h})\right)^2\,,
\end{align*}
where we sum over all grid cells $c_H$ of the spline mesh and all local quadrature points within these cells indexed by $q_H$ with respect to the deformation energy and 
over all grid cells $c_h$ of the finer finite element mesh and all local quadrature points within these cells indexed by $q_h$. Here,
$(\quadratureWeight^{c_H}_{q_H},\quadraturePoint^{c_H}_{q_H})$ and $(\quadratureWeight^{c_h}_{q_h},\quadraturePoint^{c_h}_{q_h})$
are the pairs of quadrature weights and points on the spline mesh and the finite element mesh, respectively.
For the fully discrete counterparts of the operators $\widetilde\operator$ and $\secondOperator$ one gets
\begin{align*}
\widetilde\Operator[\Deformation](\TestDeformation)
=&\sum_{c_H}\sum_{q_H} \quadratureWeight^{c_H}_{q_H} \Big(
-2\gamma((D\Deformation)^{-T}\circ\Deformation_1(\quadraturePoint^{c_H}_{q_H})) 
D\Delta\Deformation_1(\quadraturePoint^{c_H}_{q_H}) :D(\TestDeformation\circ\Deformation_1(\quadraturePoint^{c_H}_{q_H})) \\
&\hspace*{12.5ex}-2\gamma D\Delta\Deformation_1(\quadraturePoint^{c_H}_{q_H}):
D((D\Deformation)^{-1}\circ\Deformation_1(\quadraturePoint^{c_H}_{q_H}))(\TestDeformation\circ\Deformation_1(\quadraturePoint^{c_H}_{q_H}))\\
&\hspace*{12.5ex} -2\Delta\Deformation_1(\quadraturePoint^{c_H}_{q_H})\cdot((D\Deformation)^{-1}\TestDeformation)\circ\Deformation_1(\quadraturePoint^{c_H}_{q_H}) \Big)\\
&-\sum_{c_h}\sum_{q_h}\frac{\quadratureWeight^{c_h}_{q_h}}{\delta}\frac{(\Image_1\circ \Deformation_1(\quadraturePoint^{c_h}_{q_h})-\Image_0(\quadraturePoint^{c_h}_{q_h}))^2}{\det D\Deformation_1(\quadraturePoint^{c_h}_{q_h})} \\
& \hspace*{14ex} 
\cdot \left((D\Deformation)^{-T}:(D^2\Deformation(D\Deformation)^{-1}\TestDeformation)-(D\Deformation)^{-T}:D\TestDeformation\right)\circ\Deformation_1(\quadraturePoint^{c_h}_{q_h})\,, \\[1em]
\SecondOperator[\Deformation](\TestDeformation)=&
\sum_{c_H}\sum_{q_H}\quadratureWeight^{c_H}_{q_H}\left(2\gamma\Delta\Deformation(\quadraturePoint^{c_H}_{q_H})\cdot\Delta\TestDeformation(\quadraturePoint^{c_H}_{q_H})
+2D\Deformation(\quadraturePoint^{c_H}_{q_H}):D\TestDeformation(\quadraturePoint^{c_H}_{q_H})\right)
\end{align*}
for $\TestDeformation\in\DeformationSpace$ with $\TestDeformation=0$ on $\partial\domain$.
Finally, one obtains the following fixed point iteration to compute the spatially discrete $\Deformation_2$ 
\begin{equation}
\Deformation^{j+1}=\SecondOperator^{-1}\circ\widetilde\Operator[\Deformation^j]
\label{eq:discreteFixPoint}
\end{equation}
for all $j\geq 0$ and initial data $\Deformation^{0}=\Id$. The application of $\SecondOperator^{-1}$ requires the solution of the associated linear system of equations.

In a preparatory step, the deformation $\Deformation_1\in\argmin_{\Deformation\in\DeformationSpace}\Energy^D[\Image_0,\Image_1,\Deformation]$,
which is used in the first step of a time discrete geodesic shooting,
is calculated using a Fletcher-Reeves nonlinear conjugate gradient descent multilevel scheme with an Armijo step size control.

Then, the deformation in the current step is computed using the fixed point iteration \eqref{eq:discreteFixPoint}, which
is stopped if the $L^\infty$-difference of the deformations in two consecutive iterations is below the threshold value $\mathrm{THRESHOLD}=10^{-12}$.
To compute $\Image_2$, we employ the spatially discrete analog of the update formula \eqref{eq:pointwiseImageUpdatealternative}
\begin{equation}
\Image_2(\quadraturePoint)=\left(\frac{\Image_1-\Image_0\circ\Deformation_1^{-1}}{\det(D\Deformation_1)
\circ\Deformation_1^{-1}}\right)\circ\Deformation_2^{-1}(\quadraturePoint)+\Image_1\circ\Deformation_2^{-1}(\quadraturePoint)\,.
\label{eq:updateFormulaNumeric}
\end{equation}
Here, we evaluate \eqref{eq:updateFormulaNumeric} at all grid nodes of the finite element grid.
To compute approximate inverse deformations $\Deformation_i^{-1}\in\DeformationSpace$, $i\in\{1,2\}$,
all cells of the grid associated with $\DeformationSpace$ are traversed and
the deformed positions $\Deformation_i(\mathbf{x}_j)$ for all vertices $\mathbf{x}_j$, $j\in\{1,\ldots,4\}$, of the current element are computed. 
Then, we use a bilinear interpolation of these deformed positions to define an approximation of $\Deformation_i^{-1}(x)$ for $x \in \domain$.
Furthermore, we explicitly ensure the boundary condition $\Deformation_i^{-1}(x)=x$ for $x \in \partial \domain$.

In our numerical experiments on real image data, we observed slight local oscillations emerging from the inexact evaluation of the
expression $\mathbf{J}_k=\Image_k(\quadraturePoint)-\Image_{k-1}\circ\Deformation_k^{-1}(\quadraturePoint)$ in the quadrature of the intensity modulation. 
Since the calculation of $\Exp{k}$ requires a recursive application of $\Exp{2}$, these oscillations turn out to be sensitive to error propagation,
and it is advantageous to apply in a post-processing step one iteration of the anisotropic diffusion filter
$(\Mass+\tau\Stiff[\mathbf{J}_k,\lambda])^{-1}\Mass$ to $\mathbf{J}_k$ for weight parameters $\tau,\lambda>0$ (see \cite{PeMa90}).
Here, $\Mass$ is the usual mass matrix and $\Stiff[\mathbf{J}_k,\lambda]$ the anisotropic stiffness matrix associated with $\ImageSpace$, \ie
$(\Stiff[\mathbf{J}_k,\lambda])_{i,j}=\sum_{c_h,q_h}
\quadratureWeight^{c_h}_{q_h}(1+\lambda^{-2}\|\nabla\mathbf{J}_k(\quadraturePoint^{c_h}_{q_h})\|^{2})^{-1} \nabla\Theta^i(\quadraturePoint^{c_h}_{q_h})\cdot\nabla\Theta^j(\quadraturePoint^{c_h}_{q_h})$
for $i,j\in\indexImageNode$.
Furthermore, in all following applications except the first test case (Figure~\ref{figure:circleExample}) we choose
$\tau=\beta^{k-2}\cdot10^{-3}$ as the exponentially decaying time step size ($k$ denoting the index of the image in the sequence and $\beta=0.8$)
and $\lambda=0.5$ as the smoothing parameter along the discrete geodesic. The impact of this filtering can be seen for instance in Figure~\ref{figure:women}.

\section{Numerical results}\label{sec:results}
In this section, we present applications of the fully discrete exponential map.
In all computations, we use the parameters $\gamma=10^{-4}$ and $\delta=10^{-2}$. 

\setlength{\unitlength}{0.05\linewidth}
\begin{figure}[htb]
\centering
\includegraphics[width=\linewidth]{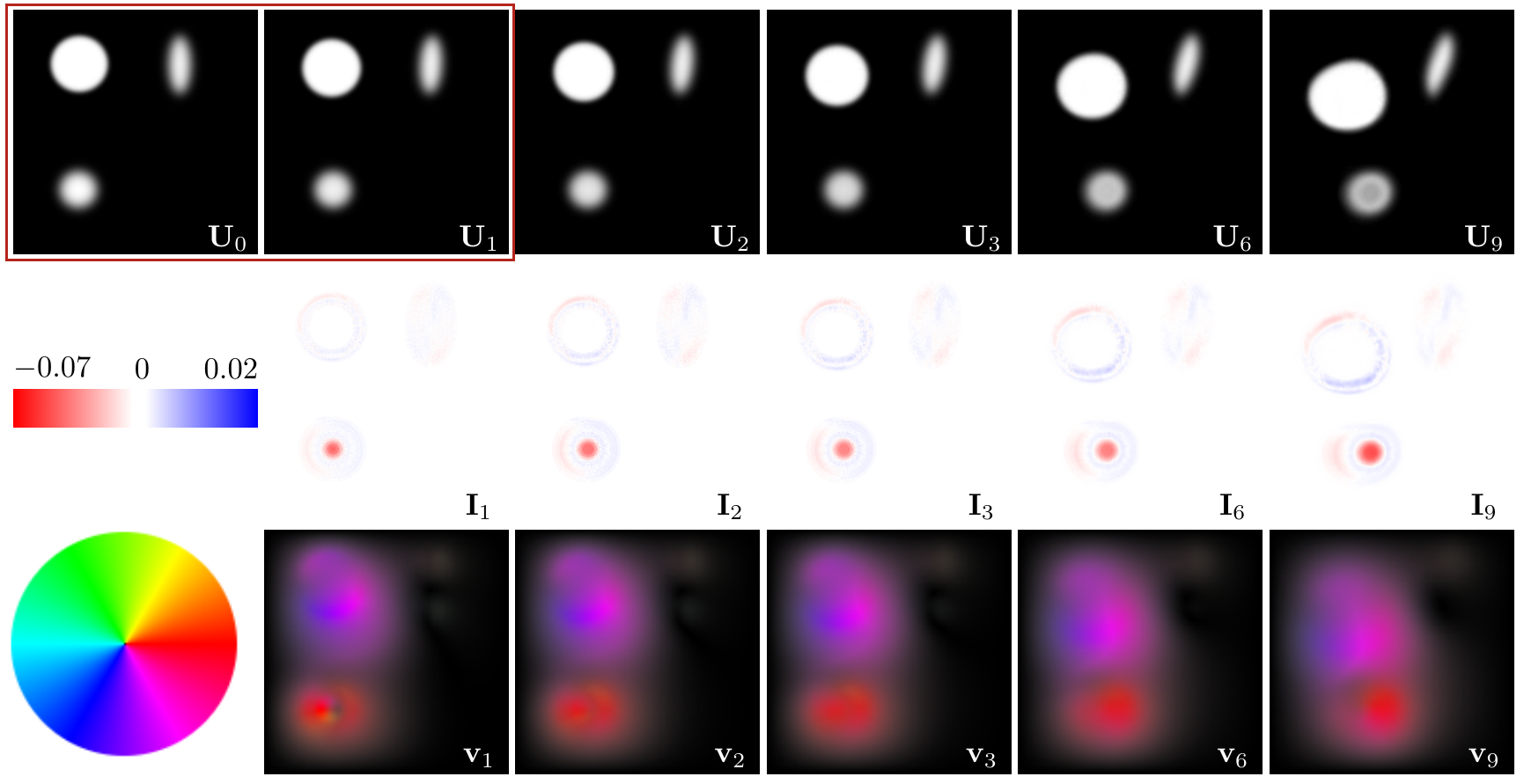}
\caption{First row: The discrete exponential map $\Exp{k}_{\Image_0}(\Image_1-\Image_0)$ with $k=0,1,2,3,6,9$ for images showing three ellipses
(input images are framed in red).
Second row: the associated intensity modulations $\IntensityModulation_k$. Third row:
the discrete velocity fields~$\Velocity_k$ (the hue refers to the direction, the intensity is proportional to its norm).}
\label{figure:circleExample}
\end{figure}
As a first example, we investigate an artificial test case consisting of an input image $\Image_0$ with three ellipses of different intensities and an associated variation $\Image_1-\Image_0$.
The first row in Figure \ref{figure:circleExample} depicts distinct images of the image sequence $\Exp{k}_{\Image_0}(\Image_1-\Image_0)$
for time steps $k=0,1,2,3,6,9$, the input images $\Image_0$ and $\Image_1$ with resolution $257\times 257$
are framed in red. 
In the initial variation $\Image_1-\Image_0$ underlying the exponential shooting, the upper left ellipse is slightly translated to the bottom and simultaneously expanded.
The upper right ellipse is undergoing a small rotation and the third one is also slightly translated with some modulation of the shading.
The initial variation encoded in the image pair $(\Image_0, \Image_1)$ is prolongated along the sequence generated by an iterative application of the discrete exponential map $\Exp{2}$.
In the second row, for each $k>0$ the discrete intensity modulations $\IntensityModulation_k=\Image_k\circ\Deformation_k-\Image_{k-1}$ are visualized.
Here, on the left the color bar with bounds coinciding with the extremal values for this image sequence is displayed.
The third row depicts the discrete velocity fields $\Velocity_k=\frac{1}{\tau}(\Deformation_k-\Id)$ for each $k$,
where $\tau=\frac{1}{K}$ is the associated time step size. Here, the hue refers to the direction 
and the color intensity is proportional to the local norm of $\Velocity_k$ as represented by the leftmost color wheel.
In particular, one observes that the resulting underlying velocity field $\Velocity_k$ is not constant in time.

\setlength{\unitlength}{0.05\linewidth}
\begin{figure}[htb]
\includegraphics[width=\linewidth]{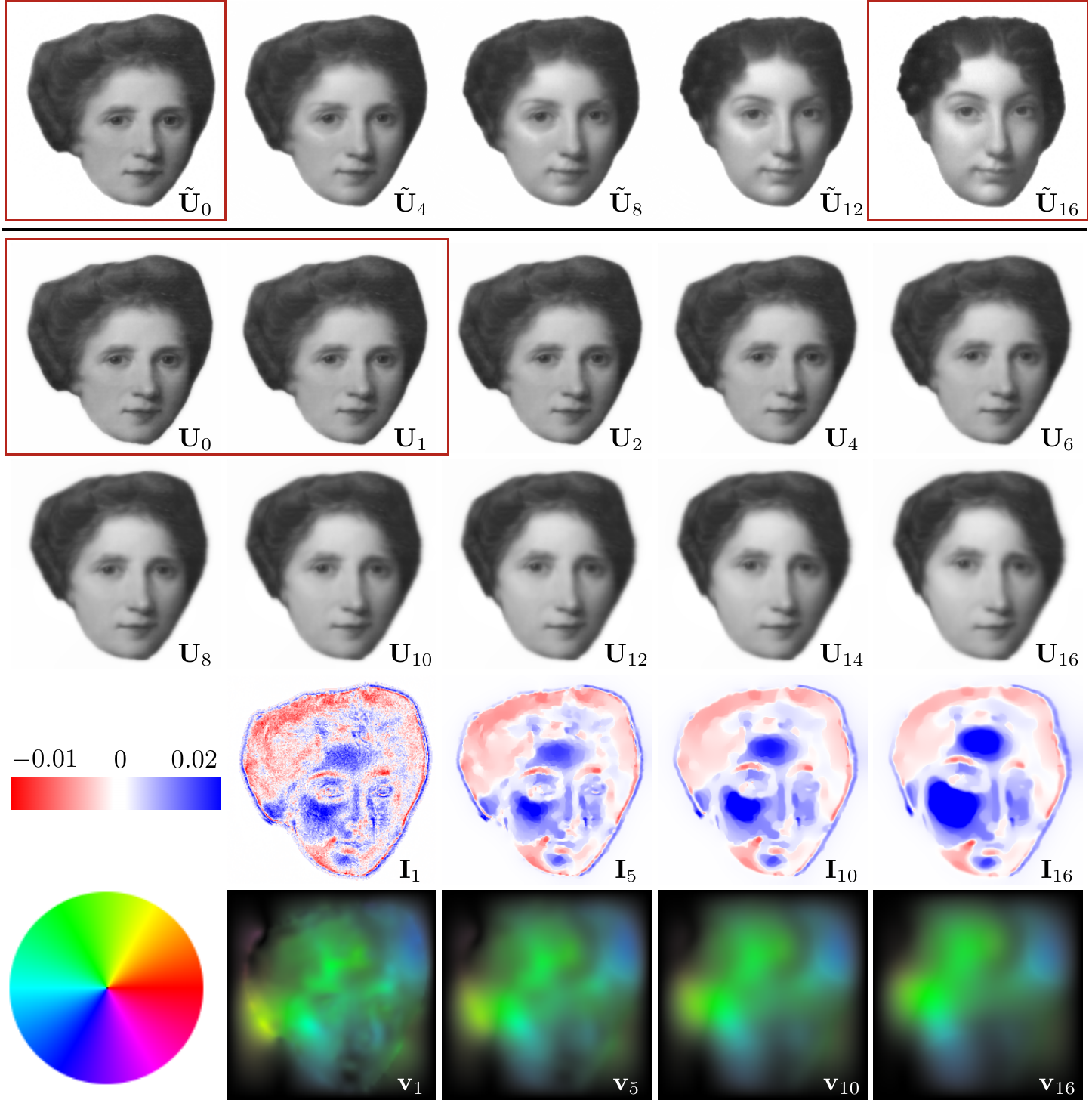}
\caption{The first row depicts distinct images of the discrete geodesic sequence associated with the  input images $\tilde\Image_0$ and $\tilde\Image_{16}$ (in red boxes).
The discrete exponential map for distinct time steps~$k$ is shown in the second and third row, where the input images $\Image_0$ and $\Image_1$
coincide with $\tilde\Image_0$ and $\tilde\Image_1$ from the geodesic sequence, respectively. In addition, the corresponding intensity modulations as well as the discrete velocity fields (fourth and fifth row)
are shown for some time steps~$k$.}
\label{figure:women}
\end{figure}
In \cite[Figure 6.2]{BeEf14}, a geodesic sequence between two female portrait
paintings\footnote{first painting by A. Kauffmann (public domain, see \nolinkurl{http://commons.wikimedia.org/wiki/File:Angelika_Kauffmann_-_Self_Portrait_-_1784.jpg}), second painting by R. Peale (GFDL, see \nolinkurl{http://en.wikipedia.org/wiki/File:Mary_Denison.jpg})}
was computed using the  finite element discretization
for both the images and the deformations on the same grid. The image resolution is $257\times 257$ ($M=8$).
We recomputed this geodesic sequence $(\tilde\Image_0,\tilde\Image_1,\ldots,\tilde\Image_{16})$ with $K=16$ for the discrete function
spaces $\ImageSpace$ and $\DeformationSpace$ with  $N=7$, the resulting sequence
is shown in the first row of Figure \ref{figure:women} with framed input images $\tilde\Image_0$ and $\tilde\Image_{16}$.
This is compared with the discrete exponential shooting for the initial image pair $\Image_0,\, \Image_1$ taken from this geodesic sequence.
One observes that the discrete exponential map is capable to recover the original geodesic sequence for small time steps $k$. Only in late stages visible differences become apparent.
Again, we highlight that the discrete motion fields significantly alter in time.

\setlength{\unitlength}{0.05\linewidth}
\begin{figure}[htb]
\includegraphics[width=\linewidth]{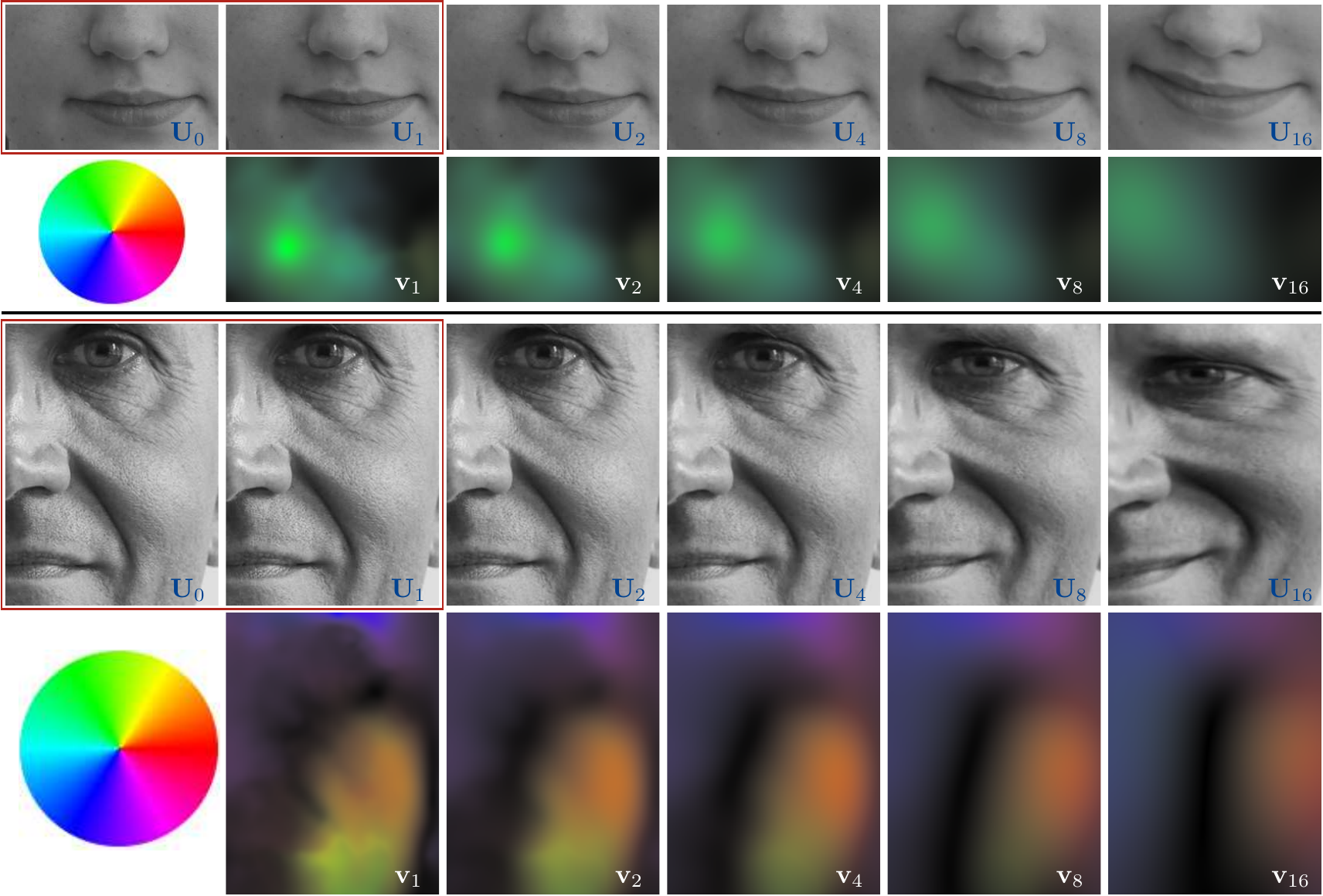}
\caption{First/third row: picture details of $\Exp{k}_{\Image_0}(\Image_1-\Image_0)$ applied to two pairs of photos of human faces for time steps $k=0,1,2,4,8,16$.
Second/fourth row: the associated discrete velocity fields $\Velocity_k$.}
\label{figure:humanFace}
\end{figure}
Figure \ref{figure:humanFace} depicts a picture details for the time steps $k=0,1,2,4,8,16$ of the discrete exponential map  applied to two different pairs of photos.
These photos show human faces and small variations of them and
the resolution of the underlying full images is $1025\times 1025$. The red boxes indicate these input images (first and third row),
which are consecutive photos of a series at $5$ and $7$ fps, respectively, taken with a digital camera.
We observe that small initial variations result in a nonlinear deformation of the lips (first row)
and of the lips, the cheeks and the eyes (third row), respectively. Furthermore, the textures are transported along the sequence.
The second and fourth row depict the color coded time varying velocity fields.

\setlength{\unitlength}{0.05\linewidth}
\begin{figure}[htb]
\includegraphics[width=\linewidth]{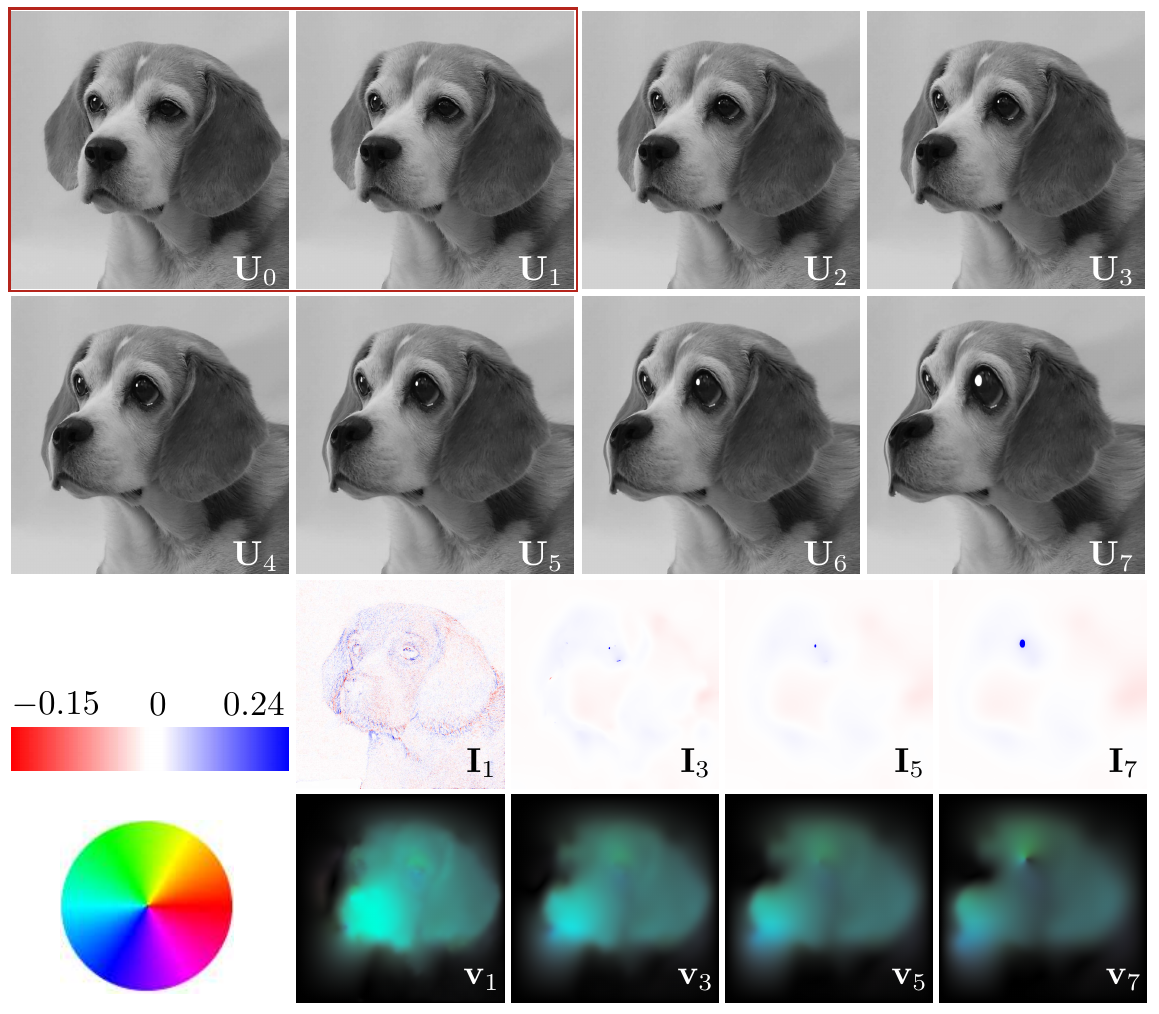}
\caption{The discrete exponential map for time steps $k=0,\ldots,7$ with two photos of a dog as initial data (first and second row).
Third/fourth row: the associated intensity modulations and velocity fields for distinct time steps.}
\label{figure:dog}
\end{figure}
Figure \ref{figure:dog} shows the discrete exponential map for $k=0,\ldots,7$ applied to a pair of images of a dog for a resolution of $1025\times 1025$.
Again, the input pictures are consecutive photos of a series with $7$ fps taken with a digital camera.
The initial image pair shows a slight rotation of the dog's head and a small opening of its eyes.
The proposed algorithm generates a extrapolation of this movement.
As a consequence in particular of the rotation, the method fills in reasonable image features below the mouth and right to the ear
which correspond to hidden object regions in $\Image_0$ and $\Image_1$.

\paragraph{Supplementary material.}
The supplementary material of this publication includes video sequences with
animations of the discrete exponential map shown in Figure~\ref{figure:women} (together with the discrete geodesic interpolation),
Figure~\ref{figure:humanFace} and Figure~\ref{figure:dog}.

\paragraph{Acknowledgements.} A. Effland and M. Rumpf acknowledge support of the Hausdorff Center for
Mathematics and the Collaborative Research Center 1060 funded by the German Research Foundation.

{\small
\bibliographystyle{abbrv}
\bibliography{EfRuSc16_MetamorphosisExp.bib}
}

\end{document}